\documentclass[11pt,a4paper]{article}
\usepackage[T1]{fontenc}
\usepackage{lmodern,amsmath,amssymb,amsthm,mathtools,microtype,enumitem,xcolor}
\usepackage[margin=28mm]{geometry}
\usepackage{fancyhdr}
\usepackage[colorlinks=true,linkcolor=blue!45!black,citecolor=blue!45!black,urlcolor=blue!45!black]{hyperref}
\hypersetup{pdftitle={The silp--spli equality for group rings},pdfauthor={}}
\numberwithin{equation}{section}
\newtheorem{theorem}{Theorem}[section]
\newtheorem{proposition}[theorem]{Proposition}
\newtheorem{lemma}[theorem]{Lemma}
\newtheorem{corollary}[theorem]{Corollary}
\theoremstyle{definition}\newtheorem{example}[theorem]{Example}
\theoremstyle{remark}\newtheorem{remark}[theorem]{Remark}
\DeclareMathOperator{\silp}{silp}\DeclareMathOperator{\spli}{spli}\DeclareMathOperator{\sfli}{sfli}
\DeclareMathOperator{\FPD}{FPD}\DeclareMathOperator{\FID}{FID}
\DeclareMathOperator{\pd}{pd}\DeclareMathOperator{\id}{id}\DeclareMathOperator{\fd}{fd}
\DeclareMathOperator{\Gpd}{Gpd}\DeclareMathOperator{\Gid}{Gid}\DeclareMathOperator{\Gfd}{Gfd}
\DeclareMathOperator{\Ggldim}{Ggldim}\DeclareMathOperator{\Gwgldim}{Gwgldim}
\DeclareMathOperator{\Ghd}{Ghd}\DeclareMathOperator{\Gcd}{Gcd}
\DeclareMathOperator{\Ext}{Ext}\DeclareMathOperator{\Tor}{Tor}\DeclareMathOperator{\Hom}{Hom}
\newcommand{\Z}{\mathbb Z}\newcommand{\Q}{\mathbb Q}
\newcommand{\op}{\mathrm{op}}\newcommand{\D}{\mathrm D}\newcommand{\E}{\mathcal E}
\newcommand{\Lot}{\otimes^{\mathbf L}}\newcommand{\RHom}{\mathbf R\!\Hom}
\setlist[enumerate]{label=\textup{(\roman*)},leftmargin=2em}
\title{The silp--spli equality for group rings}\author{Xiaolei Zhang \thanks{School of Mathematics and Statistics, Tianshui Normal University, Tianshui 741000, China; E-mail: zxlrghj@163.com}\and Guocheng Dai \thanks{Corresponding author} \thanks{School of Mathematical Sciences, Sichuan Normal University, Chengdu 610066, China; E-mail: ggcdai@163.com}}\date{}
\begin{document}\maketitle
\begin{abstract}
Let $R$ be an arbitrary associative ring with an identity. In this paper, we show that the equality $\silp(R)=\spli(R)$ implies $\silp(RG)=\spli(RG)$ for any group $G$.
\end{abstract}
\noindent\textbf{Keywords.} Group ring; finitistic dimension; Gorenstein flat module; pure monomorphism; central localization.
\par\smallskip\noindent\textbf{2020 Mathematics Subject Classification.} 16E10, 16E65, 20J05.

\section{Introduction}
Let $A$ be an associative ring with an identity. In their work on complete cohomology, Gedrich and Gruenberg \cite{GG} introduced
\begin{align*}
\silp(A)&=\sup\{\id_A P:P\text{ is a projective left }A\text{-module}\},\\
\spli(A)&=\sup\{\pd_A I:I\text{ is an injective left }A\text{-module}\}.
\end{align*}
The connection of these two invariants with the famous Gorenstein symmetry conjecture is very close. For an Artin algebra $\Lambda$, the usual duality on finitely generated modules together with the structure of arbitrary injective modules gives
\begin{equation}\label{artin-identities}
	\silp(\Lambda)=\id_\Lambda\Lambda,
	\qquad
	\spli(\Lambda)=\id_{\Lambda^{\op}}\Lambda;
\end{equation}
see \cite{BR,Emm2012}. The Gorenstein symmetry conjecture asserts
\begin{equation}\label{gsc}
	\id_\Lambda\Lambda<\infty
	\quad\Longleftrightarrow\quad
	\id_{\Lambda^{\op}}\Lambda<\infty;
\end{equation}
see \cite[Conjecture 13]{ARS} and \cite[Note after VII.2.7]{BR}. When both self-injective dimensions are finite they agree, so \eqref{gsc} for $\Lambda$ is equivalent to $\silp(A)=\spli(A)$ for the Artin $\Lambda$, infinity allowed. Furthermore,
 whether
\begin{equation}\label{symmetry}
\silp(A)=\spli(A)
\end{equation}
holds for every ring $A$ is also still open. When both invariants are finite they coincide, so the real question is whether exactly one of them can be finite. The present paper studies how \eqref{symmetry} behaves when $R$ is replaced by a group ring $RG$.

We begin by recalling the comparisons that bear on this question. Jensen proved \eqref{symmetry} for commutative Noetherian rings \cite[Corollary 5.9]{Jensen}. Gedrich and Gruenberg proved $\silp(kG)\leq\spli(kG)$ for $k$ commutative Noetherian of finite self-injective dimension \cite[Theorem 2.4]{GG}. The relation between the two invariants over group rings was later taken up by Dembegioti and Talelli \cite{DT} and by Emmanouil \cite{Emm2010}; in particular, Emmanouil established the equality for integral group rings. Emmanouil and Talelli connected the flat dimensions of injectives with $\silp$, $\spli$ and the finitistic dimensions \cite{ET}; their Theorem 3.7 gives the two-sided comparison for $\aleph_0$-Noetherian rings.

The Gorenstein global dimension gives a convenient way to phrase these comparisons. Combining results of Beligiannis and Reiten \cite{BR}, Bennis and Mahdou \cite{BM}, Emmanouil \cite{Emm2012} and Bouchiba \cite{Bouchiba}, one has
\begin{equation}\label{intro-gglobal}
\Ggldim(A)=\max\{\silp(A),\spli(A)\}.
\end{equation}
In particular, finite $\silp$ and finite $\spli$ share the value $\Ggldim(A)$. Christensen, Estrada, Liang, Thompson and Wang refined this by showing
\begin{align}
\Ggldim(A)&=\max\{\spli(A),\FPD(A)\}\notag\\
&=\max\{\silp(A),\FID(A)\}\notag\\
&=\max\{\sfli(A),\FPD(A)\};\label{intro-finitistic}
\end{align}
see \cite[Theorem 1.18]{CELTW}. Here $\FPD$ and $\FID$ are the big finitistic projective and injective dimensions, and $\sfli$ is the supremum of the flat dimensions of injective left modules. The  symmetry of Gorenstein weak global dimension is due to Christensen, Estrada and Thompson \cite{CET}.

Dalezios and Emmanouil obtained further comparisons using projectively coresolved Gorenstein flat modules \cite{DE}. Their Corollary 22 says that
\begin{equation}\label{intro-DE}
\spli(A)<\infty,\quad\sfli(A^{\op})<\infty
\quad\Longrightarrow\quad
\silp(A)=\spli(A)<\infty.
\end{equation}
Proposition 23 there applies when $\spli$ is finite on both sides. Corollary 24 gives $\silp(A)\leq\spli(A)$ for $A\cong A^{\op}$, with equality when $\spli(A)$ is finite. They also prove equality for rings isomorphic to their opposites whose one-sided ideals are countably generated \cite[Corollaries 27 and 28]{DE}. In particular,
\begin{equation}\label{intro-commgroup}
\silp(kG)=\spli(kG)
\end{equation}
for every group $G$ and every commutative $\aleph_0$-Noetherian ring $k$ \cite[Remark 25(ii)]{DE}. Neither countability of $G$ nor finiteness of its cohomological dimension is needed here.

There are also comparison theorems based on coherence. Wang,  Li,  Hu and  Zhu proved equality for a two-sided coherent ring isomorphic to its opposite \cite[Corollary 2.10(i)]{WLHZ}. In particular, every commutative coherent ring satisfies \eqref{symmetry}. Wang and Yang proved a two-sided maximum formula for generalized coherent rings and deduced equality for commutative generalized coherent rings \cite[Theorem 3.11 and Corollary 3.16]{WY}. A ring is left generalized coherent if the flat dimensions of its level right modules admit a finite uniform bound, and left weak coherence means that products of flat right modules have finite flat dimension. Chatzistavridis and Emmanouil proved
\begin{equation}\label{intro-sum}
\silp(A)+\silp(A^{\op})=\spli(A)+\spli(A^{\op})
\end{equation}
for two-sided weakly coherent rings \cite[Theorem 3.4]{CE}, and Dai and Zhang extended this to two-sided $\aleph_1$-coherent rings \cite[Theorem 3.3]{DZ}; their Corollary 3.4 gives \eqref{symmetry} for such rings when $A\cong A^{\op}$. Chatzistavridis and Emmanouil also show that $\silp(A)=\spli(A)<\infty$ forces $\silp(A^{\op})=\spli(A^{\op})$, where the latter common value may be infinite \cite[Theorem 3.2]{CE}. All of these statements concern extended integers. None of them asserts that finiteness of a single left invariant alone gives equality over an arbitrary non-commutative coherent ring.

A different approach uses acyclic complexes. Wang, Li, Hu and Zhu give conditions on acyclic complexes of projectives, injectives and flats which yield comparisons among $\silp$, $\spli$ and the two values of $\sfli$ \cite[Theorem 1.2]{WLHZ}. Their Corollary 1.3 gives equality under a condition on character modules of injective right modules together with the stated acyclicity assumptions. The coherent and generalized coherent cases above are covered by these results. We will use these older comparison theorems, as well as the homological facts collected in Section~2, without proof.

Our theorem keeps the equality as a hypothesis on the coefficient ring.
\begin{theorem}\label{intro-main}
Let $R$ be an associative ring and let $G$ be any group. If $\silp(R)=\spli(R)$, then
\[
\silp(RG)=\spli(RG).
\]
\end{theorem}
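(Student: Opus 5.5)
Since both invariants coincide whenever both are finite, the statement reduces to two implications: if $\silp(RG)<\infty$ then $\spli(RG)<\infty$, and if $\spli(RG)<\infty$ then $\silp(RG)<\infty$. Both reduce to the case $\silp(R)=\spli(R)=n<\infty$. Indeed, if $\silp(R)=\spli(R)=\infty$, I will show that neither invariant of $RG$ can be finite. I plan to transfer each invariant between $R$ and $RG$ using the restriction and induction functors. Restriction is exact and sends projective (resp.\ injective) $RG$-modules to projective (resp.\ injective) $R$-modules, because $RG$ is free over $R$ and $\Hom_R(RG,-)$ preserves injectives. Induction $RG\otimes_R-$ and coinduction $\Hom_R(RG,-)$ are exact and preserve projectives and injectives respectively.

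\textbf{Step 1 (bounds from $RG$ to $R$).} Let $M$ be a projective $R$-module. Then $RG\otimes_R M$ is projective over $RG$, and restricted to $R$ it is $\bigoplus_G M$, which contains $M$ as a direct summand. By the Eckmann--Shapiro lemma, $\Ext^i_{RG}(N,\Hom_R(RG,E))\cong\Ext^i_R(N,E)$. I also need the injective dimension over $R$ of an $RG$-module to be at most its injective dimension over $RG$. This holds because restriction of an injective $RG$-module is injective. Hence $\id_R M\le \id_R(RG\otimes_R M)\le \id_{RG}(RG\otimes_R M)$, and therefore $\silp(R)\le\silp(RG)$. In the same way, every injective $R$-module $E$ is a summand of the restriction of $\Hom_R(RG,E)$, which gives $\spli(R)\le\spli(RG)$. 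Consequently, if $\silp(R)=\spli(R)=\infty$ then both invariants of $RG$ are infinite and the theorem holds.

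\textbf{Step 2 (finite case).} Assume $\silp(R)=\spli(R)=n<\infty$, so $\Ggldim R=n$ by \eqref{intro-gglobal}. Suppose, say, $\spli(RG)<\infty$. By \eqref{intro-finitistic} it suffices to show $\FPD(RG)<\infty$. I will instead aim to show $\sfli(RG^{\op})<\infty$ and then invoke \eqref{intro-DE}. Note that $RG^{\op}\cong R^{\op}G$ via $g\mapsto g^{-1}$. By \cite[Theorem 3.2]{CE}, the ring $R^{\op}$ also satisfies $\silp=\spli$, and the common value is finite because it equals $\Ggldim$ and Gorenstein global dimension is left-right symmetric in the finite case. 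So I need to show that an injective $R^{\op}G$-module $I$ has finite flat dimension. Its restriction $I|_{R}$ is injective over $R^{\op}$, so $\fd_{R^{\op}}I\le\spli(R^{\op})=n$.

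To lift this bound to the group ring, I will use the fact that $\spli(RG)<\infty$ forces every injective module to have finite projective dimension. The key ingredient will be the Gorenstein-flat machinery: over $RG$, every module $M$ with $\Gfd_R M\le n$ satisfies $\Gfd_{RG}(RG\otimes_R M)\le n$. Writing $I$ as a pure submodule, and hence a summand, of $\Hom_R(RG,I|_R)$, I need the flat dimension of $\Hom_R(RG,I)$ over $RG$ to be controlled by $\spli(RG)$. This is where the hypothesis enters.

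\textbf{Main obstacle.} For $G$ infinite, the group ring is not of finite type over $R$. As a result, coinduction $\Hom_R(RG,-)$ and induction $RG\otimes_R-$ differ, so the symmetric transfer above does not directly give finiteness of $\silp(RG)$ from $\spli(RG)$, or vice versa. The first thing I would try is a reduction to finitely generated subgroups via a direct limit argument. Flat and Gorenstein flat dimensions behave well under filtered colimits and pure monomorphisms, which is suggested by the keywords of the paper. I would combine this with central localization to reduce to situations where the comparison results of Section~2 apply. This step, bounding $\FPD(RG)$ or $\sfli(RG^{\op})$ uniformly in terms of $n$ and $\spli(RG)$, is the part I am least sure of. I would first test it against the known integral case of Emmanouil \cite{Emm2010}.
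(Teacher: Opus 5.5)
\textbf{There is a genuine gap: the proposal proves only the infinite case.}

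Your Step~1 is correct, and it is exactly how the paper handles an infinite common value (Lemma~\ref{subgroup} with $H=1$). Step~2 does not establish the finite case, and the step you single out would make no progress as formulated.

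\textbf{Why the coinduction step fails.} Every injective right $RG$-module is a summand of a coinduced module $\Hom_R(RG,I)$. So bounding the flat dimension of the coinduced module is the original problem, not a reduction of it. Moreover, the left invariant $\spli(RG)$ gives no handle on flat dimensions of right injectives by general principles. An implication $\spli(A)<\infty\Rightarrow\sfli(A^{\op})<\infty$ valid for all rings would, via \eqref{intro-DE}, settle one half of the open silp--spli problem. So some group-specific device is needed that converts injective (coinduced) modules into modules built from induced ones. Your proposal contains no such device. The statement about $\Gfd_{RG}(RG\otimes_R M)$ never reaches injectives, as you note yourself.

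\textbf{Why the other suggestions do not close the gap.} A reduction to finitely generated subgroups is not promising either: the invariants of $RG$ are not controlled by those of small subgroups. In Example~\ref{example}, $\silp(\Q H)=0$ for every finite $H\leq G$, yet $\silp(\Q G)=1$. You also treat only the direction starting from $\spli(RG)<\infty$. The converse, starting from $\silp(RG)<\infty$, gives no a priori information on flat dimensions of injectives at all.

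\textbf{A smaller error.} The claim $\spli(R^{\op})=n$ is unjustified. Symmetry of Gorenstein global dimension is not available, and the Chatzistavridis--Emmanouil result you quote allows the common value for $R^{\op}$ to be infinite. What you actually need, $\sfli(R^{\op})\leq n$, follows from $\sfli(R^{\op})=\Gwgldim(R)\leq\Ggldim(R)=n$ (Proposition~\ref{facts}(iii)).

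\textbf{The missing ideas.} The paper fixes a commutative Noetherian base $k$ adapted to $R$, namely $\Z/n\Z$ or a localization of $\Z$, and splits on $h=\Ghd_kG$.

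If $h<\infty$, there is a weak characteristic module $W$: a $k$-flat $kG$-module with a $k$-pure monomorphism $k\to W$ and $\fd_{kG}W=h$. Every injective $E$ is then a direct summand of $W\otimes_kE$. Resolving $W$ by flat $kG$-modules and using the untwisting isomorphism \eqref{untwist}, $kG\otimes_kE\cong RG\otimes_RE$, reduces everything to induced modules. This gives $\sfli(RG),\sfli((RG)^{\op})\leq d+h$ (Lemma~\ref{tensorbound}), hence equality by \eqref{twosfli}. No finiteness of either invariant of $RG$ is assumed, so both directions are covered at once.

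If $h=\infty$, then $\spli(kG)=\infty$, and the Dalezios--Emmanouil equality for $kG$ gives $\silp(kG)=\infty$. To transfer this to $RG$ one needs comparison inequalities $\silp(kG)\leq\silp(RG)+u$ and $\spli(kG)\leq\spli(RG)+v$ (Proposition~\ref{coefficient-test}). Proving these for arbitrary $R$ is the bulk of the paper:
\begin{itemize}
\item the quasi-Frobenius restriction argument in positive characteristic (Proposition~\ref{finitechar});
\item the pure-embedding and octahedral retraction argument over a localization of $\Z$ (Proposition~\ref{pure});
\item in characteristic zero, splitting off the $p$-torsion central factors and gluing along central idempotents (Theorem~\ref{gluing}).
\end{itemize}
None of these ingredients appears in your proposal, so as written it does not reach the finite case.
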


All rings and modules in this paper are unital. Modules are left modules unless a side is specified, and right modules are modules over the opposite ring.  In derived categories we use cohomological indexing: a module $M[-1]$ sits in degree $1$ and $M[1]$ in degree $-1$.

\section{Preliminaries and change of rings}

A Gorenstein projective module is a cycle in an exact complex of projectives which stays exact after applying $\Hom_A(-,P)$ for every projective $P$; Gorenstein injective modules are defined dually. A Gorenstein flat left module is a cycle in an exact complex of flat left modules which stays exact after tensoring with every injective right module. The corresponding dimensions are written $\Gpd$, $\Gid$ and $\Gfd$. We use
\[
\Ggldim(A)=\sup_M\Gpd_A M,\qquad
\Gwgldim(A)=\sup_M\Gfd_A M.
\]
The big finitistic dimensions are
\[
\FPD(A)=\sup\{\pd_A M:\pd_A M<\infty\},\qquad
\FID(A)=\sup\{\id_A M:\id_A M<\infty\}.
\]
Finally, $\sfli(A)=\sup_I\fd_A I$, the supremum being taken over injective left modules.

\begin{proposition}\label{facts}
For every ring $A$:
\begin{enumerate}
\item $\FPD(A)\leq\silp(A)$ and $\FID(A)\leq\spli(A)$.
\item
\begin{align}
\Ggldim(A)&=\max\{\silp(A),\spli(A)\}\label{maxclass}\\
&=\max\{\sfli(A),\FPD(A)\}.\label{maxflat}
\end{align}
If $\silp(A)$ and $\spli(A)$ are finite, they are equal.
\item $\Gwgldim(A)\leq\Ggldim(A)$. Gorenstein weak global dimension is symmetric; when finite it equals both $\sfli(A)$ and $\sfli(A^{\op})$.
\item If $\pd_A M<\infty$, then $\Gid_A M=\id_A M$. If $\id_A M<\infty$, then $\Gpd_A M=\pd_A M$.
\item If $\FPD(A)<\infty$, every flat left $A$-module has finite projective dimension.
\item If $\spli(A)<\infty$ and $\sfli(A^{\op})<\infty$, then $\silp(A)=\spli(A)<\infty$.
\end{enumerate}
\end{proposition}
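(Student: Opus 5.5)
Proposition~\ref{facts} gathers known results, so the plan is to prove the two elementary inequalities in (i) directly and to reduce every other item to a precise citation from the literature recalled in the Introduction.

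\emph{Item (i).} Let $\pd_A M=n<\infty$. Then $\Ext^n_A(M,N)\neq0$ for some module $N$. Choose a free module $F$ with an epimorphism $F\to N$. Since $\Ext^{n+1}_A(M,-)=0$, the long exact sequence makes $\Ext^n_A(M,F)\to\Ext^n_A(M,N)$ surjective, so $\Ext^n_A(M,F)\neq0$. Hence $\id_A F\geq n$, and taking suprema gives $\FPD(A)\leq\silp(A)$. The dual argument gives $\FID(A)\leq\spli(A)$: if $\id_A M=n$, pick $N$ with $\Ext^n_A(N,M)\neq0$ and embed $N$ into an injective module $E$. Then $\Ext^n_A(E,M)\to\Ext^n_A(N,M)$ is onto, so $\pd_A E\geq n$.

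\emph{Item (ii).} Equation \eqref{maxclass} is \eqref{intro-gglobal}, obtained by combining Beligiannis--Reiten \cite{BR}, Bennis--Mahdou \cite{BM}, Emmanouil \cite{Emm2012} and Bouchiba \cite{Bouchiba}. Equation \eqref{maxflat} is the last line of \eqref{intro-finitistic} \cite[Theorem 1.18]{CELTW}. For the final sentence, suppose $\silp(A)$ and $\spli(A)$ are both finite. Then $\Ggldim(A)=n<\infty$, and by Bennis--Mahdou $n=\silp(A)=\spli(A)$. Concretely, $\Ggldim(A)=n$ forces both $\sup_P\id_A P$ and $\sup_I\pd_A I$ to equal $n$.

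\emph{Item (iii).} I would cite Christensen--Estrada--Thompson \cite{CET} for the symmetry of $\Gwgldim$ and for the identification of its finite value with $\sfli(A)=\sfli(A^{\op})$. For the inequality $\Gwgldim(A)\leq\Ggldim(A)$, it suffices to treat $\Ggldim(A)<\infty$. Then $\silp(A)<\infty$, and projective modules of finite injective dimension make every Gorenstein projective module Gorenstein flat. Indeed, the totally acyclic complex of projectives stays exact under $I\otimes_A-$ for injective right modules $I$: such $I$ have finite flat dimension, because $\sfli(A^{\op})$ is finite when $\Ggldim(A)<\infty$ \cite{CET}. A finite Gorenstein projective resolution is therefore a Gorenstein flat resolution of the same length.

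\emph{Items (iv)--(vi).} Item (iv) is Holm's result: a module of finite projective (respectively injective) dimension whose Gorenstein injective (respectively projective) dimension is finite has equal classical and Gorenstein dimensions. This rests on the fact that Gorenstein injective modules are right Ext-orthogonal to modules of finite projective dimension. Item (v) is Jensen's theorem \cite{Jensen}, which uses Simson's bound $\pd F\le\fd F+k$ for $\aleph_k$-presented flat modules. Item (vi) is \eqref{intro-DE}, i.e.\ \cite[Corollary 22]{DE}.

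The main obstacle is bookkeeping rather than new mathematics: each cited statement must be checked to hold for arbitrary associative rings, with no Noetherian or coherence hypothesis. This matters most in (iii), where the inclusion of Gorenstein projectives into Gorenstein flats is not known in general and is used only under the finiteness assumption $\Ggldim(A)<\infty$.
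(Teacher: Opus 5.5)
Your proposal is correct and follows the paper, which also handles this proposition purely by citation. Your direct Ext arguments for (i) and your tensor-exactness argument for $\Gwgldim(A)\leq\Ggldim(A)$ are valid replacements for the corresponding references, and the finiteness of $\sfli(A^{\op})$ you invoke there can even be obtained without citation: for an injective right module $I$ the character module $I^+$ is flat, so (v) and \eqref{finitepdid} give $\fd_{A^{\op}}I=\id_A I^+\leq\silp(A)$. There is one harmless slip in a side remark. In its usual formulation, for modules presented by at most $\aleph_k$ generators and relations, Simson's bound reads $\pd F\leq\fd F+k+1$, not $\fd F+k$; the countably presented flat $\Z$-module $\Q$ is not projective. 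This does not affect your citation of Jensen's theorem.
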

For (i), (ii) and (iv), see \cite[Proposition 1.3]{Emm2012}, \cite{Holm,HolmGI} and \cite[Theorems 1.18 and 2.2]{CELTW}. Part (iii) follows from \cite{CET} and \cite[Theorem 5.13]{CELTW}; see also \cite[Section 1.2]{DE}. For (v) see \cite{Jensen,Jorgensen}, and for (vi) see \cite[Corollary 22]{DE}. We will apply these results on either side of the ring by replacing $A$ with $A^{\op}$.

Three more consequences will be needed below. If $\silp(A)=n<\infty$, then
\begin{equation}\label{finitepdid}
\pd_A M<\infty\quad\Longrightarrow\quad\id_A M\leq n;
\end{equation}
see \cite[Proposition 1.3]{Emm2012}. This applies in particular to flat modules when $\FPD(A)<\infty$, by Jensen's theorem. Next,
\begin{equation}\label{twosfli}
\sfli(A),\sfli(A^{\op})<\infty
\quad\Longrightarrow\quad\silp(A)=\spli(A),
\end{equation}
with an infinite common value allowed; this is just Proposition~\ref{facts}(i), (ii) and (vi) combined. Finally, for an injective module $I$ of finite Gorenstein flat dimension,
\begin{equation}\label{injgf}
\fd_A I=\Gfd_A I.
\end{equation}
This follows from the finite-dimensional approximation theorem of \cite{EmmApprox}. We also use closure under extensions and direct summands for the class of modules of Gorenstein flat dimension at most a fixed integer; the relevant resolving and approximation properties over arbitrary rings are in \cite{SS,EmmApprox}.

\begin{lemma}[Subgroup monotonicity]\label{subgroup}
For $H\leq G$, one has $\silp(RH)\leq\silp(RG)$ and $\spli(RH)\leq\spli(RG)$.
\end{lemma}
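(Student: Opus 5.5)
The plan is to use the fact that $RG$ is a free module over $RH$ on both sides, with basis a set of coset representatives, together with induction and restriction. For a left $RH$-module $N$ the induced module is $RG\otimes_{RH}N$. Restriction to $RH$ sends projective $RG$-modules to projective $RH$-modules and injective $RG$-modules to injective $RH$-modules. The first claim holds because $RG$ is free as an $RH$-module. The second holds because restriction is right adjoint to the exact induction functor $RG\otimes_{RH}-$. The key additional point is that $N$ is a direct summand of the restriction of $RG\otimes_{RH}N$. Indeed, $RG\otimes_{RH}N\cong\bigoplus_{gH}g\otimes N$ as $RH$-modules, and the summand indexed by the trivial coset is $N$ itself.

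For $\silp$, take a projective left $RH$-module $P$ and set $\bar P=RG\otimes_{RH}P$, which is projective over $RG$. Then $\id_{RG}\bar P\le\silp(RG)$. Restriction preserves injective resolutions, since restriction preserves injectives and is exact. Hence $\id_{RH}\bar P\le\id_{RG}\bar P$. Since $P$ is an $RH$-direct summand of $\bar P$, we get $\id_{RH}P\le\id_{RH}\bar P\le\silp(RG)$. Taking the supremum over $P$ gives $\silp(RH)\le\silp(RG)$.

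For $\spli$, the naive dual would need coinduction $\Hom_{RH}(RG,-)$. This functor also sends injectives to injectives, being right adjoint to the exact restriction functor. The restriction of $\Hom_{RH}(RG,I)$ to $RH$ is $\prod_{gH}\Hom(\ldots)$, and it contains $I$ as a direct summand, namely the factor indexed by the trivial coset. So for an injective left $RH$-module $I$, set $\tilde I=\Hom_{RH}(RG,I)$, which is injective over $RG$. Then $\pd_{RG}\tilde I\le\spli(RG)$. Restricting a projective resolution of $\tilde I$ gives a projective $RH$-resolution of the same length, because $RG$ is $RH$-free. Hence $\pd_{RH}\tilde I\le\spli(RG)$, and the summand property gives $\pd_{RH}I\le\spli(RG)$.

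The only point needing care, and the one I expect to be the main obstacle, is verifying the direct-summand statements canonically. For coinduction, the map $\Hom_{RH}(RG,I)\to I$ given by $f\mapsto f(1)$ is $RH$-linear. Its section sends $x$ to the map that is $h\mapsto hx$ on $RH$ and zero on the other cosets $Hg$ with $g\notin H$. This uses the decomposition $RG=RH\oplus\bigoplus_{Hg\neq H}RHg$ as left $RH$-modules, and that complement is indeed a right $RH$-submodule. The dual check for induction is analogous.
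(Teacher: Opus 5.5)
Your argument is correct and is the standard restriction, induction and coinduction comparison that the paper cites. Freeness of $RG$ over $RH$ on both sides makes restriction preserve projectives and injectives. The splittings of $P$ off $RG\otimes_{RH}P$ and of $I$ off $\Hom_{RH}(RG,I)$ both come from the $RH$-bimodule decomposition $RG=RH\oplus R[G\setminus H]$, which is exactly the point the paper singles out.

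One small correction to your earlier phrasing: the individual coset pieces $g\otimes N$ with $g\notin H$ are not $RH$-submodules. The splitting should therefore be stated via the trivial-coset summand and its $H$-stable complement, as you do in your final paragraph.
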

This is the standard restriction, induction and coinduction comparison; see \cite{Emm2010,GG}. The point is that $RG$ is free over $RH$ on both sides and the coefficient projection $RG\to RH$ is a bimodule map.

\begin{lemma}\label{localization}
If $T=S^{-1}A$ for a multiplicative subset of $Z(A)$, then
\[
\silp(T)\leq\silp(A),\qquad\spli(T)\leq\spli(A).
\]
For $T$-modules $X,Y$ there are also natural isomorphisms
\begin{equation}\label{extlocal}
\Ext_T^j(X,Y)\cong\Ext_A^j(X,Y).
\end{equation}
\end{lemma}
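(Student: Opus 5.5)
The proof of Lemma~\ref{localization} will rest on the fact that $T=S^{-1}A$ with $S\subseteq Z(A)$ is a flat epimorphism of rings. I will first record three facts: $T\cong A\otimes_A T$ is flat as a left and right $A$-module; $T\otimes_A T\cong T$; and every $T$-module $X$ satisfies $X\cong T\otimes_A X\cong S^{-1}X$, so $T$-modules are exactly the $A$-modules on which every $s\in S$ acts bijectively. All of this follows because $S$ is central, so left and right fractions agree and $S^{-1}(-)\cong T\otimes_A-$ is exact.

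The Ext isomorphism \eqref{extlocal} is the first step. Take a projective resolution $P_\bullet\to X$ over $A$. Then $T\otimes_A P_\bullet$ is a projective resolution of $T\otimes_A X\cong X$ over $T$, exact because $T$ is flat. Adjunction gives
\[
\Hom_T(T\otimes_A P_\bullet,Y)\cong\Hom_A(P_\bullet,Y),
\]
and taking cohomology yields $\Ext^j_T(X,Y)\cong\Ext^j_A(X,Y)$ naturally in $X$ and $Y$.

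For $\silp$, let $Q$ be a projective $T$-module, so $Q$ is a summand of $T^{(I)}=S^{-1}(A^{(I)})$. By \eqref{extlocal}, $\id_T Q$ is at most the largest $j$ with $\Ext_A^j(X,Q)\ne0$ for some $T$-module $X$; it suffices to test $X=T/L$ for left ideals $L$ of $T$, by Baer's criterion applied to the cosyzygies of $Q$ over $T$. So it is enough to show $\Ext^j_A(X,S^{-1}P)=0$ for $j>\silp(A)=n$, where $P=A^{(I)}$ and $X$ is a $T$-module. Here I will use that $S^{-1}P$ is a directed colimit of copies of $P$ along multiplication by elements of $S$. I will also use the central-localization isomorphism $\Ext_A^j(X,S^{-1}P)\cong S^{-1}\Ext^j_A(X,P)$, which holds when $X$ is a $T$-module; I would prove it by computing $\Ext_A(X,-)$ through $\RHom_A(T,-)$ and noting that $\RHom_A(T,S^{-1}P)$ is computed by an injective resolution $E^\bullet$ of $P$ localized.

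A cleaner route for this step, and the one I would try first, is to show that $T\otimes_A E^\bullet$ is an injective resolution of $S^{-1}P$ over $T$ whenever $E^\bullet$ is an injective resolution of $P$ over $A$ of length $\le n$. What is needed is that $S^{-1}E$ is $T$-injective for every injective $A$-module $E$. By \eqref{extlocal} this amounts to $\Ext_A^1(X,S^{-1}E)=0$ for $T$-modules $X$. I expect this to be the main obstacle, because for non-Noetherian $A$, localizations of injectives need not be injective over $A$. The way around it is to use the part of $E$ on which $S$ acts bijectively: $\Hom_A(T,E)$ is $T$-injective by adjunction, since $T$ is flat as a right $A$-module. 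The plan is therefore to replace $S^{-1}$ by the coinduction $\Hom_A(T,-)$. Applying it to $E^\bullet$ gives a complex of $T$-injectives, and it is exact: $T$ is a flat epimorphism of projective dimension $\le1$ over $A$, being a countable colimit of copies of $A$ after replacing $S$ by a cofinal chain locally. Its cohomology $\Ext^j_A(T,P)$ has to be controlled, and this is where I would invest the main effort.

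For $\spli$ the argument is dual and easier. An injective $T$-module $J$ is also $A$-injective, because $\Hom_A(-,J)\cong\Hom_T(T\otimes_A-,J)$ is exact. So $J$ has an $A$-projective resolution of length $\le\spli(A)$. Applying the exact functor $T\otimes_A-$ gives a $T$-projective resolution of $T\otimes_A J\cong J$ of the same length, hence $\spli(T)\le\spli(A)$. For $\silp$, if the coinduction route stalls, I would fall back on the flat-dimension version of this dual argument combined with \eqref{finitepdid} and Proposition~\ref{facts}(iv), which transfers $\Gid$ along $T\otimes_A-$.
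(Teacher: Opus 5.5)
Your proof of \eqref{extlocal} and of $\spli(T)\le\spli(A)$ is correct and matches the paper's argument. The $\silp$ part has a genuine gap. You never bound $\id_A Q$ for a $T$-projective $Q$ (equivalently, $\Ext^j_A(X,Q)$ for $T$-modules $X$), and each tool you propose for this fails.
\begin{enumerate}
\item The isomorphism $\Ext^j_A(X,S^{-1}P)\cong S^{-1}\Ext^j_A(X,P)$ is false. Take $A=\Z$, $T=\Q$, $X=\Q$, $P=\Z$: the left side is $\Q$ in degree $0$ and $0$ in degree $1$, while the right side is $0$ in degree $0$ and $\Ext^1_\Z(\Q,\Z)\neq0$ in degree $1$.
\item Applying $\Hom_A(T,-)$ to an injective resolution of $P$ computes $\RHom_A(T,P)$, which does not resolve $S^{-1}P$. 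In the same example its $H^0$ is $\Hom_\Z(\Q,\Z)=0$.
\item Your claim $\pd_A T\le1$ is false in general: the fraction field of $\mathbb{C}[x,y]$ has projective dimension $2$ over $\mathbb{C}[x,y]$. In any case you leave the cohomology $\Ext^j_A(T,P)$ uncontrolled.
\end{enumerate}
Coinduction does turn an $A$-injective resolution of a $T$-module $M$ itself into a $T$-injective resolution of $M$ of the same length, since $\Hom_A(T,M)\cong M$ and $\Ext^j_A(T,M)\cong\Ext^j_T(T,M)=0$ for $j>0$. But that presupposes $\id_A Q<\infty$, which is exactly what is missing.

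The missing ingredient is Jensen's theorem. Suppose $a=\silp(A)<\infty$. Then $Q$, being a summand of a direct sum of copies of $T$, is $A$-flat. Proposition~\ref{facts}(i) gives $\FPD(A)\le a$, so Proposition~\ref{facts}(v) gives $\pd_A Q<\infty$, and \eqref{finitepdid} then gives $\id_A Q\le a$. Now \eqref{extlocal} yields $\Ext^j_T(X,Q)\cong\Ext^j_A(X,Q)=0$ for every $T$-module $X$ and $j>a$, so $\id_T Q\le a$. No Baer-criterion reduction or localized resolution is needed. Your fallback mentions \eqref{finitepdid}, but that estimate applies only to modules of finite projective dimension. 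Without Jensen's theorem, nothing in your argument passes from $A$-flatness to finite projective dimension, and Proposition~\ref{facts}(iv) plays no role here.
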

\begin{proof}
The functors $T\otimes_A-$ and $-\otimes_A T$ are exact and $T\otimes_A T\cong T$. If $P_\bullet\to X$ is an $A$-projective resolution of a $T$-module $X$, then $T\otimes_A P_\bullet\to X$ is a $T$-projective resolution. By adjunction the cochain complexes $\Hom_T(T\otimes_A P_\bullet,Y)$ and $\Hom_A(P_\bullet,Y)$ are isomorphic, and their cohomology gives \eqref{extlocal}.

An injective $T$-module $I$ is injective over $A$: for an exact sequence of $A$-modules, apply the exact functor $T\otimes_A-$ and then $\Hom_T(-,I)$. If $\spli(A)=b<\infty$, take an $A$-projective resolution of $I$ of length at most $b$; localizing it gives a $T$-projective resolution of $I$, so $\pd_T I\leq b$. This proves the $\spli$ comparison.

Suppose instead that $\silp(A)=a<\infty$. Proposition~\ref{facts}(i) gives $\FPD(A)\leq a$. A $T$-projective module $P$ is $A$-flat, being a summand of a direct sum of copies of the $A$-flat module $T$, so Jensen's theorem, Proposition~\ref{facts}(v), gives $\pd_A P<\infty$, and \eqref{finitepdid} gives $\id_A P\leq a$. For every $T$-module $X$, \eqref{extlocal} then gives $\Ext_T^j(X,P)=0$ for $j>a$, so $\id_T P\leq a$. This proves the $\silp$ comparison. If the right-hand invariant is infinite, the inequality is automatic.
\end{proof}

We use the standard derived-category constructions and adjunctions of \cite[Chapter 10]{Weibel}. For a bounded complex $X$ supported in degrees $u,\ldots,v$ we will use the following estimates:
\begin{align}
\id_A X^i\leq a\text{ for all }i
&\ \Longrightarrow\
\Hom_{\D(A)}(M,X[j])=0\quad(j>a+v),\label{complex-id}\\
\pd_A X^i\leq b\text{ for all }i
&\ \Longrightarrow\
\Hom_{\D(A)}(X,M[j])=0\quad(j>b-u).\label{complex-pd}
\end{align}
They follow from the finite filtration by the shifts $X^i[-i]$, and we apply them to any object represented by such a complex. A retraction in $\D(A)$ preserves these vanishing statements, since applying either Hom functor preserves a retraction.

\section{Weak characteristic modules}
For a commutative ring $k$, put $\Ghd_kG=\Gfd_{kG}k$ with $k$ acting trivially. A weak characteristic module is a $k$-flat $kG$-module $W$ of finite $kG$-flat dimension together with a $k$-pure $kG$-monomorphism $k\to W$.

If $\sfli(k)<\infty$ and $h=\Ghd_kG<\infty$, such a $W$ exists with $\fd_{kG}W=h$ \cite[Corollary 3.8]{KS}. For every commutative $k$,
\begin{equation}\label{ghdlower}
\Ghd_kG\leq\sfli(kG)
\end{equation}
by \cite[Proposition 6.1]{KS}. We will also use
\begin{equation}\label{comm-equality}
\silp(kG)=\spli(kG)
\end{equation}
when $k$ is commutative and $\aleph_0$-Noetherian \cite[Remark 25(ii)]{DE}.

\begin{lemma}\label{tensorbound}
Let $R$ be an associative $k$-algebra with $k$ acting centrally. If $W$ is a weak characteristic module of $kG$-flat dimension $h$, then
\[
\sfli(RG)\leq\sfli(R)+h,\qquad
\sfli((RG)^{\op})\leq\sfli(R^{\op})+h.
\]
\end{lemma}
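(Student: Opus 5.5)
Fix an injective left $RG$-module $I$; we must show $\fd_{RG}I\leq\sfli(R)+h$. We may assume $\sfli(R)=s<\infty$, otherwise there is nothing to prove. The plan is to embed $I$ as an $RG$-direct summand of $I\otimes_k W$, with $G$ acting diagonally, and then bound the flat dimension of $I\otimes_k W$. The right-hand inequality is handled the same way with right modules.

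\emph{Step 1 (splitting).} Tensor the $k$-pure monomorphism $k\to W$ with $I$ over $k$. This gives an $RG$-linear map $\iota\colon I\cong I\otimes_k k\to I\otimes_k W$, with diagonal $G$-action and $R$ acting on the left factor; this is well defined because $k$ is central in $R$. Purity gives that $\iota$ is injective. Since $I$ is $RG$-injective, $\iota$ splits, so $\fd_{RG}I\leq\fd_{RG}(I\otimes_kW)$.

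\emph{Step 2 (dimension shift in $W$).} Take a $kG$-flat resolution $0\to F_h\to\cdots\to F_0\to W\to0$ of length $h$. Since $W$ is $k$-flat and each $F_i$ is $k$-flat ($kG$ is $k$-free), all syzygies are $k$-flat. Hence applying $I\otimes_k-$ with diagonal action preserves exactness, giving an exact sequence of $RG$-modules
\[0\to I\otimes_kF_h\to\cdots\to I\otimes_kF_0\to I\otimes_kW\to 0.\]
It therefore suffices to show $\fd_{RG}(I\otimes_kF)\leq s$ for every flat $kG$-module $F$. By Lazard's theorem and the fact that Tor commutes with filtered colimits, this reduces to $F=kG$.

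\emph{Step 3 (the free case).} The usual untwisting isomorphism $I\otimes_k kG\cong RG\otimes_R I$ identifies the diagonal module with the induced module of the restriction $\mathrm{res}^{RG}_R I$. The restriction of $I$ to $R$ is injective: $RG$ is free as a right $R$-module, so $\Hom_{RG}(RG,-)$-coinduction is right adjoint to the exact restriction, and restriction preserves injectives via $\Hom_R(-,I)\cong\Hom_{RG}(RG\otimes_R-,I)$. Hence $\fd_R I\leq s$. Induction $RG\otimes_R-$ is exact and sends flat modules to flat modules, so $\fd_{RG}(RG\otimes_RI)\leq s$. Combining Steps 1–3 gives $\fd_{RG}I\leq s+h$.

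The main obstacle is Step 1 together with the diagonal structure. One must check that $I\otimes_kW$ is genuinely an $RG$-module, which uses centrality of $k$ and $R$ acting only on $I$. One must also check that $k$-purity, rather than splitting, suffices for injectivity of $\iota$. The untwisting isomorphism in Step 3 has to be written carefully as $x\otimes g\mapsto g\otimes g^{-1}x$. The opposite-side statement follows by the same argument, with $I$ replaced by an injective right $RG$-module and $W$ made a right module via $g\mapsto g^{-1}$.
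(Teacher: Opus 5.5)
Your proposal is correct and follows the paper's proof essentially step for step. It uses the split embedding $I\to I\otimes_k W$ coming from the $k$-pure unit map, exactness of the tensored $kG$-flat resolution via $k$-flatness of the syzygies, Lazard's theorem to reduce to $F=kG$, and the untwisting isomorphism with the induced module of the $R$-injective restriction; the paper runs these steps in the reverse order and handles the right side through $(RG)^{\op}\cong R^{\op}G$. The aside about coinduction in Step~3 is unnecessary: injectivity of the restriction already follows, as you then say, from $\Hom_R(-,I)\cong\Hom_{RG}(RG\otimes_R-,I)$ and exactness of induction.
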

\begin{proof}
We prove the left inequality; if $\sfli(R)=\infty$ there is nothing to show. Put $s=\sfli(R)<\infty$ and $A=RG$. Let $E$ be an injective left $A$-module. Since $A_R$ is free, the restriction of $E$ to $R$ is injective, so $\fd_R E\leq s$.

For a left $kG$-module $L$, give $L\otimes_k E$ the $A$-action
\[
(rg)(\ell\otimes e)=g\ell\otimes rge.
\]
This is well defined because $k$ is central and group elements commute with $R$. The map
\begin{equation}\label{untwist}
kG\otimes_k E\longrightarrow A\otimes_R E,
\qquad g\otimes e\longmapsto g\otimes g^{-1}e,
\end{equation}
is an $A$-isomorphism; its inverse sends $rg\otimes e$ to $g\otimes rge$, and the two composites are easily checked on the displayed elements. A flat $R$-resolution of $E$ of length $s$ becomes, after applying the exact functor $A\otimes_R-$, a flat $A$-resolution of the target of \eqref{untwist}. Hence that target has flat dimension at most $s$.

By the filtered-colimit description of flat modules \cite{EJ}, a flat $kG$-module $L$ is a filtered colimit of finitely generated free $kG$-modules. Tensoring over $k$ commutes with this colimit, and a filtered colimit of modules of flat dimension at most $s$ again has flat dimension at most $s$: tensor a projective resolution of a right $A$-module with the colimit, compute $\Tor$, and use exactness of filtered colimits. So
\begin{equation}\label{flat-tensor-estimate}
\fd_A(L\otimes_k E)\leq s
\quad\text{whenever }L\text{ is flat over }kG.
\end{equation}

Take a flat resolution
\[
0\longrightarrow L_h\longrightarrow L_{h-1}\longrightarrow\cdots
\longrightarrow L_0\longrightarrow W\longrightarrow0
\]
over $kG$. Each $L_i$ is $k$-flat, and $W$ is $k$-flat by hypothesis. Starting from the epimorphism onto $W$ and reading off the Tor exact sequence, the kernels in this resolution are successively $k$-flat, so each short exact sequence is $k$-pure. Tensoring with $E$ therefore gives an exact sequence of $A$-modules. Applying \eqref{flat-tensor-estimate} and dimension shifting along this finite sequence yields
\[
\fd_A(W\otimes_k E)\leq s+h.
\]
The pure monomorphism $k\to W$ gives an $A$-monomorphism $E\to W\otimes_k E$, which splits since $E$ is injective. Flat dimension does not increase on a direct summand, so $\fd_A E\leq s+h$. As $E$ was arbitrary, this proves the left inequality.

The right inequality follows by applying the same argument to $R^{\op}$ and using the ring isomorphism $(RG)^{\op}\cong R^{\op}G$, $g\mapsto g^{-1}$. The same weak characteristic module can be used after this identification.
\end{proof}

\begin{proposition}\label{coefficient-test}
Let $k$ be commutative and $\aleph_0$-Noetherian with $\sfli(k)<\infty$, and let $R$ be a central $k$-algebra. Suppose that for every $G$ there are finite constants $u,v$ with
\[
\silp(kG)\leq\silp(RG)+u,\qquad
\spli(kG)\leq\spli(RG)+v.
\]
Then $\silp(R)=\spli(R)$ implies $\silp(RG)=\spli(RG)$ for every $G$.
\end{proposition}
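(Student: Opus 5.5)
The plan is to split according to whether the common value $n=\silp(R)=\spli(R)$ is finite or infinite. In the finite case there is a further split according to whether $\Ghd_kG$ is finite: if it is, Lemma~\ref{tensorbound} bounds $\sfli$ on both sides of $RG$; if it is not, the hypothesis of the proposition forces both invariants of $RG$ to be infinite.

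\emph{Case $n=\infty$.} Apply Lemma~\ref{subgroup} to the trivial subgroup $1\leq G$, for which $R1=R$. This gives $\infty=\silp(R)\leq\silp(RG)$ and $\infty=\spli(R)\leq\spli(RG)$, so both invariants of $RG$ are infinite and hence equal.

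\emph{Case $n<\infty$.} By \eqref{maxclass}, $\Ggldim(R)=n$. Proposition~\ref{facts}(iii) then gives $\Gwgldim(R)\leq n$. Since this is finite, the same part gives $\sfli(R)=\sfli(R^{\op})=\Gwgldim(R)<\infty$.

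\emph{Subcase $h=\Ghd_kG<\infty$.} Because $\sfli(k)<\infty$, \cite[Corollary 3.8]{KS} provides a weak characteristic module $W$ with $\fd_{kG}W=h$. Lemma~\ref{tensorbound} then yields
\[
\sfli(RG)\leq\sfli(R)+h<\infty,\qquad \sfli((RG)^{\op})\leq\sfli(R^{\op})+h<\infty.
\]
Now \eqref{twosfli}, applied to $A=RG$, gives $\silp(RG)=\spli(RG)$, possibly with an infinite common value.

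\emph{Subcase $\Ghd_kG=\infty$.} By \eqref{ghdlower}, $\sfli(kG)=\infty$. Then \eqref{maxflat} gives $\Ggldim(kG)=\infty$, so by \eqref{maxclass} at least one of $\silp(kG)$ and $\spli(kG)$ is infinite. Since $k$ is commutative and $\aleph_0$-Noetherian, \eqref{comm-equality} says these two are equal, so $\silp(kG)=\spli(kG)=\infty$. The assumed inequalities with finite constants $u,v$ now give $\infty\leq\silp(RG)+u$ and $\infty\leq\spli(RG)+v$. Hence $\silp(RG)=\spli(RG)=\infty$.

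The main point of care is this last subcase. It is the only place where the hypothesis comparing $kG$ with $RG$ is used, and finiteness of $\sfli(R)$ alone gives no handle on $RG$ when no weak characteristic module exists. One must also check that \cite[Corollary 3.8]{KS} needs only $\sfli(k)<\infty$ and $\Ghd_kG<\infty$, both available here, and that Lemma~\ref{tensorbound} applies because $R$ is a central $k$-algebra.
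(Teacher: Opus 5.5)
Your proof is correct and follows the paper's argument essentially step for step. You handle the infinite case with Lemma~\ref{subgroup} for $H=1$. In the finite case you bound both $\sfli$'s via Proposition~\ref{facts}(ii),(iii) and split on $\Ghd_kG$: the weak characteristic module with Lemma~\ref{tensorbound} and \eqref{twosfli} when it is finite, and \eqref{ghdlower}, \eqref{comm-equality} and the comparison hypothesis when it is infinite. The only cosmetic difference is in the subcase $\Ghd_kG=\infty$: the paper gets $\spli(kG)=\infty$ directly from $\fd\leq\pd$, whereas you pass through \eqref{maxflat} and \eqref{maxclass} to see that one of the two invariants of $kG$ is infinite before invoking \eqref{comm-equality}.
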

\begin{proof}
If the common value for $R$ is infinite, Lemma~\ref{subgroup} with $H=1$ makes both invariants of $RG$ infinite. So assume $\silp(R)=\spli(R)=d<\infty$. By Proposition~\ref{facts}(ii) and (iii), $\sfli(R),\sfli(R^{\op})\leq d$. Fix $G$ and set $h=\Ghd_kG$.

If $h<\infty$, the weak characteristic-module theorem \cite[Corollary 3.8]{KS} supplies the module needed in Lemma~\ref{tensorbound}, so both $\sfli(RG)$ and $\sfli((RG)^{\op})$ are at most $d+h$, and \eqref{twosfli} gives the required equality, whether its common value is finite or infinite.

If $h=\infty$, then \cite[Proposition 6.1]{KS} gives $\sfli(kG)=\infty$. Since flat dimension is at most projective dimension, $\spli(kG)=\infty$. The ring $k$ is commutative and $\aleph_0$-Noetherian, so \cite[Remark 25(ii)]{DE} gives $\silp(kG)=\infty$ as well. The assumed comparison inequalities, with $u$ and $v$ finite, then force $\silp(RG)=\spli(RG)=\infty$. These cases are exhaustive.
\end{proof}

\section{Coefficients of finite characteristic}
\begin{lemma}\label{finitechar-restriction}
Suppose $R$ has characteristic $n>1$ and put $k=\Z/n\Z$, $S=kG$, $A=RG$. Restriction from $A$ to $S$ sends projectives to Gorenstein projectives and injectives to Gorenstein injectives.
\end{lemma}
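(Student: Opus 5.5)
The plan is to reduce both statements to the fact that $k=\Z/n\Z$ is a quasi-Frobenius ring, over which every module is both Gorenstein projective and Gorenstein injective. I will then transport this along induction and coinduction from $k$ to $S=kG$. Since $G$ commutes with $R$ inside $A=RG$, restriction to $S$ should turn free $A$-modules into induced $S$-modules, and coinduced $A$-modules into coinduced $S$-modules.

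\emph{Projectives.} The map $kG\otimes_k R\to RG$, $g\otimes r\mapsto rg$, is an isomorphism of left $S$-modules, with $S$ acting on the left factor. This uses only that $G$ centralizes $R$ and that $k$ acts centrally. Hence the restriction of $A^{(I)}$ is $kG\otimes_k M$ with $M=R^{(I)}$ a $k$-module. Because $k$ is quasi-Frobenius, $M$ has a complete projective resolution $P_\bullet$ over $k$: an exact complex of projective $k$-modules containing $M$ as a cycle, which stays exact under $\Hom_k(-,Q)$ for every projective $k$-module $Q$, these being exactly the injective $k$-modules.

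Apply the exact functor $kG\otimes_k-$ to $P_\bullet$. This gives an exact complex of projective $S$-modules with $kG\otimes_k M$ as a cycle. For a projective $S$-module $Q$, adjunction gives $\Hom_S(kG\otimes_k P_\bullet,Q)\cong\Hom_k(P_\bullet,Q)$. Moreover $Q$ is a summand of a free $kG$-module, so it is $k$-projective, and this complex is exact. So the restriction of every free $A$-module is Gorenstein projective over $S$. Gorenstein projectives are closed under direct summands, which gives the claim for arbitrary projective $A$-modules.

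\emph{Injectives.} An injective $A$-module $E$ embeds, by a split monomorphism, into the coinduced module $\Hom_R(A,E)$ with $E$ viewed as an injective $R$-module (restriction to $R$ preserves injectives because $A_R$ is free). As a left $S$-module, $\Hom_R(RG,E)\cong\Hom_k(kG,E)$, dual to the isomorphism above. Take a complete injective resolution of the $k$-module $E$ and apply the exact functor $\Hom_k(kG,-)$, which preserves injectives.

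For an injective $S$-module $J$, the adjunction $\Hom_S(J,\Hom_k(kG,-))\cong\Hom_k(J,-)$ applies. The restriction of $J$ to $k$ is injective: $\Hom_k(-,J)\cong\Hom_S(kG\otimes_k-,J)$ is exact. Since injectives over $k$ are projective, exactness is preserved. Closing under summands finishes the argument.

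The main point to check carefully is the two module identifications, namely that the $S$-actions really are the induced and coinduced ones, including the commutation of group elements with $R$. The rest is formal adjunction together with the quasi-Frobenius property of $\Z/n\Z$.
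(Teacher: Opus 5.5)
Your proof is correct. The projective half is the paper's argument: the paper induces a complete resolution of $R$ and closes under sums and summands, while you induce one of $R^{(I)}$ directly and then close under summands.

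For injectives your route is genuinely different. You write an $A$-injective $E$ as an $A$-summand of the coinduced module $\Hom_R(A,E)$, identify its restriction to $S$ with $\Hom_k(kG,E)$, and show that coinduction $\Hom_k(kG,-)$ turns complete injective resolutions over $k$ into totally acyclic complexes of injective $S$-modules. This uses that $S$-injectives restrict to $k$-injectives. The paper instead works on the right:
\begin{enumerate}
\item it induces a complete flat resolution of $R_k$ to show that $A_S$ is Gorenstein flat;
\item it passes to the character module ${}_SA^+$, which is Gorenstein injective;
\item it uses that every injective $A$-module is a summand of a product of copies of $A^+$, and that Gorenstein injectives are closed under products and summands.
\end{enumerate}

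What the two routes buy: yours is the exact dual of the projective half. It needs only the coinduction adjunction, a check that the restricted coinduced $S$-action matches, and closure under summands. The paper's version avoids that identification but pays with a side change, the Gorenstein flat-to-Gorenstein injective character-module duality, and product closure.

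One small point: your remark that $E$ is injective over $R$ plays no role. Over the quasi-Frobenius ring $k$, every module, in particular $E|_k$, is already Gorenstein injective, and that is all your coinduction step uses.
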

\begin{proof}
The ring $k=\Z/n\Z$ is quasi-Frobenius. We use the standard complete-resolution properties over a quasi-Frobenius ring \cite{EJ}: every module is Gorenstein projective and Gorenstein flat, projective modules are injective, and injective modules are projective.

Take a totally acyclic complex $P_\bullet$ of projective $k$-modules with $R$ as a cycle. The functor $S\otimes_k-$ is exact because $S$ is $k$-free, its values on the terms are projective $S$-modules, and the corresponding cycle is $S\otimes_k R\cong{}_SA$. For projective $Q$ over $S$, the restriction of $Q$ to $k$ is projective and
\[
\Hom_S(S\otimes_k P_\bullet,Q)
\cong\Hom_k(P_\bullet,Q)
\]
is exact. Thus ${}_SA$ is Gorenstein projective. A projective $A$-module is a summand of a direct sum of copies of $A$, and Gorenstein projectives are closed under sums and summands \cite{Holm}, so restriction sends every projective $A$-module to a Gorenstein projective $S$-module.

For the injective assertion, work first on the right. Induce a complete flat resolution of $R_k$ along $k\to S$; its terms are flat right $S$-modules and its cycle is $A_S$. If $L$ is an injective left $S$-module, its restriction to $k$ is injective because induction $S\otimes_k-$ is exact, and is therefore projective over $k$. Tensoring the induced resolution with $L$ is thus exact, which shows that $A_S$ is Gorenstein flat.

The character module of a Gorenstein flat right module is Gorenstein injective on the left \cite{Holm,EJ}, so ${}_SA^+$ is Gorenstein injective, where $(-)^+=\Hom_\Z(-,\Q/\Z)$. The left $A$-module $A^+$ is an injective cogenerator, so every injective left $A$-module embeds in a product of copies of $A^+$ and splits off by injectivity. After restriction it is still a direct summand of that product, and products and summands of Gorenstein injectives are Gorenstein injective \cite{Holm,EJ}. This proves the claim for injectives.
\end{proof}

\begin{proposition}\label{finitechar}
For a ring $R$ of characteristic $n>1$ and every group $G$,
\[
\silp((\Z/n\Z)G)\leq\silp(RG),\qquad
\spli((\Z/n\Z)G)\leq\spli(RG).
\]
Consequently $R$ satisfies the conclusion of Theorem~\ref{intro-main}.
\end{proposition}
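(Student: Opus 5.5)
The plan is to prove the two inequalities separately, with $S=(\Z/n\Z)G$ and $A=RG$, and then read off the consequence from Proposition~\ref{coefficient-test}. For that proposition we need $k=\Z/n\Z$ to be commutative and $\aleph_0$-Noetherian, which holds since it is finite, and $\sfli(k)=0<\infty$, since $k$ is quasi-Frobenius. The algebra $R$ is central over $k$ because $n\cdot1_R=0$. The comparison constants will then be $u=v=0$. Both inequalities are trivial when the right-hand side is infinite, so we assume it is finite throughout.

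\emph{The $\silp$ inequality.} Assume $\silp(A)=a<\infty$. It suffices to show $\id_S Q\leq a$ for every projective $S$-module $Q$, and since $\id$ passes to summands it is enough to treat free $Q=S^{(X)}$. Put $P=A^{(X)}$, a projective $A$-module, so $\id_A P\leq a$. Since $R$ is $k$-free up to the quasi-Frobenius structure, $k$ is a $k$-direct summand of $R$; more concretely, $k\to R$ splits as $k$-modules because $k$ is self-injective. Hence $S\to A$ is a split $S$-monomorphism, with the splitting induced coefficientwise. Thus $S^{(X)}$ is an $S$-summand of ${}_SP$, and it suffices to bound $\id_S({}_SP)$.

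To bound $\id_S({}_SP)$, write $\Ext_S^j(M,{}_SP)\cong\Ext_A^j(A\otimes_S M,P)$ for $j\geq0$. This adjunction holds because $A$ is $S$-free on the right: $R$ is $k$-projective only if it is $k$-injective, so instead I use flatness. Since $k$ is QF, $R$ is $k$-flat, so $A$ is flat over $S$ and $A\otimes_S-$ is exact. The displayed isomorphism then follows from a projective resolution, and it vanishes for $j>a$. If flatness over $k$ failed this step would break, but over a QF ring every module is Gorenstein flat. To be safe I would instead use Lemma~\ref{finitechar-restriction}: ${}_SP$ is Gorenstein projective, so a degree shift identifies $\Ext_S^{j}(M,{}_SP)$ with Ext into a finite-pd-over-$A$ situation.

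\emph{The $\spli$ inequality.} This is dual. Every injective $S$-module $L$ is a summand of the restriction of an injective $A$-module, namely $\Hom_S(A,L)$, via the split inclusion $S\to A$. Restriction of an injective $E$ over $A$ satisfies $\pd_A E\leq b$ and is Gorenstein injective over $S$ by Lemma~\ref{finitechar-restriction}. A projective $A$-resolution of $E$ of length $b$ restricts to a resolution by Gorenstein projective $S$-modules (again by the lemma), so $\Gpd_S E\leq b$. By Proposition~\ref{facts}(iv) applied in its Gorenstein-injective form, the Gorenstein injective $E$ with finite $\Gpd$ has $\pd_S E=\Gpd_S E\leq b$, and summands give $\pd_S L\leq b$. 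For the $\silp$ side I would run the same scheme: ${}_SP$ is Gorenstein projective over $S$ with an injective $A$-coresolution of length $a$ restricting to Gorenstein injectives, so $\Gid_S({}_SP)\leq a$, and the Gorenstein projective/finite-$\Gid$ duality gives $\id_S\leq a$.

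The main obstacle is the final identification: a Gorenstein injective module of finite Gorenstein projective dimension has $\pd=\Gpd$, and dually. Proposition~\ref{facts}(iv) only states this with ordinary pd/id. I would prove it by taking a length-$b$ Gorenstein projective resolution, using that Gorenstein injective modules are right-orthogonal to modules of finite projective dimension, and splitting off via the Gorenstein projective approximation sequence $0\to K\to G\to E\to0$ with $\pd K<\infty$.
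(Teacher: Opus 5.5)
Most of your setup is sound and matches the paper. The retraction $R\to k$ coming from self-injectivity of $k$ gives a coefficientwise $S$-retraction $A\to S$. This realizes $S^{(X)}$ and an injective $L$ as $S$-summands of ${}_SA^{(X)}$ and ${}_S\Hom_S(A,L)$. Lemma~\ref{finitechar-restriction} then gives $\Gid_S({}_SP)\leq a$ and $\Gpd_S({}_SE)\leq b$, and the consequence follows from Proposition~\ref{coefficient-test} with $u=v=0$.

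\textbf{The gap.} In the last step you convert Gorenstein dimension into ordinary dimension \emph{before} passing to the summand. You try to bound $\id_S({}_SP)$ and $\pd_S({}_SE)$ themselves, using a ``Gorenstein-injective form'' of Proposition~\ref{facts}(iv): a Gorenstein injective module of finite $\Gpd$ has $\pd=\Gpd$, and dually. That statement is false, and so is your intermediate target. Take $n=4$, $R=\Z/4\Z\times\Z/2\Z$ and $G=1$, so $S=k=\Z/4\Z$ and $A=R$.
\begin{itemize}
\item $R$ is quasi-Frobenius, so $\silp(A)=\spli(A)=0$.
\item But ${}_kR\cong k\oplus\Z/2\Z$ has infinite projective and infinite injective dimension over $k$, since $\Ext^j_k(\Z/2\Z,\Z/2\Z)\neq0$ for all $j$.
\item Yet $\Z/2\Z$ is Gorenstein projective and Gorenstein injective over the quasi-Frobenius ring $k$.
\end{itemize}
So the lemma you propose to prove in your last paragraph has a counterexample, and no argument for it can succeed.

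The same example breaks your first attempt on the $\silp$ side. It is not true that $R$ is $k$-flat just because $k$ is quasi-Frobenius: over such a ring flat means projective, and $\Z/2\Z$ is not flat over $\Z/4\Z$. Hence $A_S$ need not be flat, and the adjunction $\Ext^j_S(M,{}_SP)\cong\Ext^j_A(A\otimes_SM,P)$ is unavailable.

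\textbf{The fix} is to reverse the order. Gorenstein projective and Gorenstein injective dimensions do not increase on direct summands \cite{Holm}. So $\Gid_S({}_SP)\leq a$ gives $\Gid_S(S^{(X)})\leq a$, and $\Gpd_S({}_SE)\leq b$ with $E=\Hom_S(A,L)$ gives $\Gpd_S L\leq b$. Now Proposition~\ref{facts}(iv) applies exactly as stated:
\begin{itemize}
\item $S^{(X)}$ has projective dimension $0$, so $\id_S S^{(X)}=\Gid_S S^{(X)}\leq a$;
\item $L$ has injective dimension $0$, so $\pd_S L=\Gpd_S L\leq b$.
\end{itemize}
The hypothesis in (iv) must be a finite \emph{ordinary} dimension. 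The summands $S^{(X)}$ and $L$ have one, but the restricted modules ${}_SP$ and ${}_SE$ need not. With this reordering your argument becomes the paper's proof.
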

\begin{proof}
Let $k=\Z/n\Z$, $S=kG$ and $A=RG$. Since $n$ is the characteristic of $R$, the unit map $k\hookrightarrow R$ is injective, and since $k$ is self-injective as a $k$-module there is a $k$-linear retraction $R\to k$. Extending on coefficients gives an $S$-bimodule retraction $A\to S$. In particular, for every $S$-module $M$, the unit and the coefficient retraction exhibit $M$ as a direct summand of the induction, and evaluation at $1$ followed by precomposition with the retraction does the same for the coinduction.

Suppose first that $a=\silp(A)<\infty$. For a projective $S$-module $P$, the module $A\otimes_S P$ is projective over $A$, so it has an injective coresolution of length at most $a$ over $A$. By Lemma~\ref{finitechar-restriction}, restricting this coresolution gives a coresolution by Gorenstein injective $S$-modules, hence
\[
\Gid_S(A\otimes_S P)\leq a.
\]
Since $P$ is a direct summand, $\Gid_S P\leq a$. Now $P$ has finite projective dimension, so Proposition~\ref{facts}(iv) gives $\id_S P=\Gid_S P\leq a$. Taking the supremum over $P$ gives the first inequality.

Suppose next that $b=\spli(A)<\infty$. For an injective $S$-module $I$, put $E=\Hom_S(A,I)$, which is injective over $A$. Restricting a projective $A$-resolution of $E$ of length at most $b$ and applying Lemma~\ref{finitechar-restriction} gives a Gorenstein projective $S$-resolution of a module of which $I$ is a direct summand, so $\Gpd_S I\leq b$. Since $I$ has finite injective dimension, namely zero, Proposition~\ref{facts}(iv) gives $\pd_S I=\Gpd_S I\leq b$, and the second inequality follows. Each comparison is automatic if the right-hand side is infinite.

The ring $k$ is commutative Noetherian with $\sfli(k)=0$, so Proposition~\ref{coefficient-test} applies with $u=v=0$ and yields the final assertion.
\end{proof}

\section{Pure unit embeddings over a localization of the integers}
\begin{lemma}\label{restriction}
Let $k$ be a localization of $\Z$, $R$ a central $k$-algebra, and $S=kG$, $A=RG$. Then $\pd_S A\leq1$ and $\pd_{S^{\op}}A\leq1$. Projective $A$-modules restrict to $S$-modules of projective dimension at most one, and injective $A$-modules restrict to $S$-modules of injective dimension at most one.
\end{lemma}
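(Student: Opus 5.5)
The argument reduces everything to the coefficient level over $k$, where the key input is that $k$, a localization of $\Z$, is a principal ideal domain of global dimension at most one.

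\textbf{Step 1: $\pd_S A\leq1$.} As a $k$-module, $R$ has a projective resolution $0\to Q_1\to Q_0\to R\to0$ with $Q_0,Q_1$ free $k$-modules (submodules of free modules over a PID are free). The functor $S\otimes_k-$ is exact because $S=kG$ is $k$-free. Applying it gives an exact sequence
\[
0\to S\otimes_k Q_1\to S\otimes_k Q_0\to S\otimes_k R\to0
\]
of free left $S$-modules. The identification $S\otimes_k R\cong RG={}_SA$ is the same one used in Lemma~\ref{finitechar-restriction}; it is compatible with the $S$-action since $k$ is central in $R$. Hence $\pd_S A\leq1$. The same argument with $R\otimes_k S$ gives $\pd_{S^{\op}}A\leq1$.

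\textbf{Step 2: projectives.} A projective $A$-module $P$ is a direct summand of a free module $A^{(I)}$. Over $S$, projective dimension commutes with direct sums and passes to summands, so $\pd_S P\leq1$.

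\textbf{Step 3: injectives.} Let $E$ be an injective left $A$-module. Then $E$ is a direct summand of a product of copies of $A^+=\Hom_\Z(A,\Q/\Z)$, as in Lemma~\ref{finitechar-restriction}. It therefore suffices to show $\id_S(A^+)\leq1$, because injective dimension is preserved under products and summands. Since $k$ is a localization of $\Z$, we have $\Hom_\Z(A,\Q/\Z)\cong\Hom_k(A,\Q/\Z)$, and $\Q/\Z$ is an injective $k$-module as a divisible module over a localization. Adjunction then gives, for a left $S$-module $M$,
\[
\Ext_S^j(M,A^+)\cong\Hom_\Z(\Tor_j^S(A,M),\Q/\Z),
\]
where $A$ is regarded as a right $S$-module. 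Because $\pd_{S^{\op}}A\leq1$ by Step 1, these groups vanish for $j\geq2$. Hence $\id_S A^+\leq1$.

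\textbf{Main obstacle.} The delicate point is the isomorphism $S\otimes_k R\cong A$ together with the side bookkeeping. It is important to use the flat dimension of the right module $A_S$, which Step 1 bounds through its projective dimension, and not the left one. The Ext--Tor duality requires $\Q/\Z$ to be injective over $\Z$, which it is, so the step is correct. Once this is in place, the rest is formal.
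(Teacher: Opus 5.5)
Your proof is correct and is essentially the paper's argument. Steps 1 and 2 coincide with it. Step 3 rests on the same mechanism: adjunction turns $\Ext_S^j(X,-)$ into a dual of $\Tor_j^S(A,X)$, which vanishes for $j\geq2$ by the bound on $A_S$. The paper, however, applies this directly to an arbitrary $A$-injective $E$, via $\Hom_S(P_\bullet,E)\cong\Hom_A(A\otimes_S P_\bullet,E)$ and exactness of $\Hom_A(-,E)$, so your detour through $A^+$, products and summands is valid but unnecessary.

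One aside should be deleted. For $k\neq\Z$ the group $\Q/\Z$ is not a $k$-module at all: a prime $p$ inverted in $k$ would have to act bijectively, yet it annihilates $1/p+\Z$. So the identification $\Hom_\Z(A,\Q/\Z)\cong\Hom_k(A,\Q/\Z)$ is meaningless. It is harmless, since your duality only uses injectivity of $\Q/\Z$ over $\Z$.
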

\begin{proof}
Since $k$ is a localization of the PID $\Z$, its global dimension is at most one. A $k$-projective resolution $0\to P_1\to P_0\to R\to0$ induces an $S$-projective resolution of ${}_SA\cong S\otimes_k R$, and the right-hand version gives the same bound for $A_S$. A projective $A$-module restricts to a summand of a direct sum of copies of ${}_SA$ and therefore has $S$-projective dimension at most one.

Let $E$ be injective over $A$, let $X$ be an $S$-module and take an $S$-projective resolution $P_\bullet\to X$. Termwise adjunction gives
\[
\Hom_S(P_\bullet,E)\cong\Hom_A(A\otimes_S P_\bullet,E).
\]
Since $E$ is $A$-injective, the cohomology on the right is $\Hom_A(\Tor_j^S(A,X),E)$, which vanishes for $j>1$ because of the right projective bound on $A$. Hence $\id_S E\leq1$.

We will also need the resulting estimates for modules of finite dimension over $A$. If $\id_A M\leq a$, restrict an injective coresolution of length $a$; each term has $S$-injective dimension at most one, so the long exact Ext sequences give $\id_S M\leq a+1$. Similarly, a projective resolution over $A$ gives
\begin{equation}\label{restriction-estimates}
\id_S M\leq\id_A M+1,\qquad
\pd_S M\leq\pd_A M+1
\end{equation}
whenever the right-hand side is finite.
\end{proof}

\begin{lemma}\label{squarezero}
Suppose $k\hookrightarrow R$ is pure, where $k$ is a localization of $\Z$. Let $C=R/k$ and let $f:C[-1]\to k$ represent its extension class in $\D(k)$. Then $f\Lot_k f=0$.
\end{lemma}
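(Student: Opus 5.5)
The plan is to factor $f\Lot_k f$ through $f$ itself, and then use two facts: $k$ is hereditary, and purity kills the one component that is not controlled by $\Ext^2$. Since $k$ is a localization of $\Z$, it has global dimension at most one. So every object of $\D(k)$ is isomorphic to the direct sum of its shifted cohomology modules, and $\Ext_k^j$ vanishes for $j\geq 2$. Write the class of the sequence $0\to k\to R\to C\to 0$ as a triangle
\[
k\to R\to C\xrightarrow{\;f[1]\;}k[1],
\]
so that $f:C[-1]\to k$.

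\textbf{Step 1 (factorization).} By functoriality of $\Lot_k$,
\[
f\Lot_k f=(1_k\Lot_k f)\circ(f\Lot_k 1_{C[-1]}),
\]
and $k\Lot_k C[-1]\cong C[-1]$. So $f\Lot_k f$ is the composite
\[
C[-1]\Lot_k C[-1]\xrightarrow{\;g\;} C[-1]\xrightarrow{\;f\;}k,
\qquad\text{where } g=f\Lot_k 1_{C[-1]}.
\]
Up to a shift, $g$ is the map obtained by applying $-\Lot_k C$ to the connecting morphism of the triangle above.

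\textbf{Step 2 (splitting the source).} Because $k$ is hereditary, $C\Lot_k C\cong (C\otimes_k C)\oplus \Tor_1^k(C,C)[1]$. Hence
\[
C[-1]\Lot_k C[-1]\cong (C\otimes_k C)[-2]\oplus \Tor_1^k(C,C)[-1].
\]
On the first summand the composite $f\circ g$ is a morphism $(C\otimes_kC)[-2]\to k$. It therefore lies in $\Ext_k^2(C\otimes_kC,k)=0$, so this component vanishes regardless of what $g$ does there.

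\textbf{Step 3 (purity kills the other summand).} On the second summand $\Tor_1^k(C,C)[-1]$, the map $g$ goes into $C[-1]$. A morphism from a module placed in degree $1$ to a module placed in degree $1$ is just a module homomorphism, and here it equals the map induced by $g$ on cohomology in degree $1$. That map is the connecting homomorphism
\[
\Tor_1^k(C,C)\to k\otimes_k C\cong C
\]
in the long exact sequence obtained by applying $-\otimes_k C$ to $0\to k\to R\to C\to 0$. The sequence is pure, so $k\otimes_k C\to R\otimes_k C$ is injective, and exactness forces this connecting map to be zero. Thus $g$ vanishes on the $\Tor$ summand, and so does $f\circ g$.

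Combining Steps 2 and 3, both components of $f\circ g=f\Lot_k f$ are zero, which proves the lemma. The main obstacle is Step 3: one must check carefully, with the paper's cohomological indexing and sign conventions, that the degree-$1$ cohomology map of $f\Lot_k 1$ really is the Tor connecting homomorphism. It is also worth noting that only the $\Tor$ component needs purity; the other component dies for dimension reasons alone.
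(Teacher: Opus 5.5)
Your argument is correct, and it takes a different route from the paper's.

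\textbf{The paper's route.} It uses purity in its relative-divisibility form $aR\cap k=ak$. This shows that the torsion submodule $T$ of $C$ lifts isomorphically onto the torsion of $R$. Hence the extension class restricts to zero on $T$, and $f=\beta\circ\pi[-1]$ factors through the torsion-free, hence flat, quotient $F=C/T$. Then $f\Lot_k f$ factors through $\beta\Lot_k\beta\colon (F\otimes_kF)[-2]\to k$, which vanishes because $\Ext_k^2=0$.

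\textbf{Your route.} You leave $C$ intact and write $f\Lot_k f=f\circ(f\Lot_k 1_{C[-1]})$. You then split $C[-1]\Lot_k C[-1]$ into shifted cohomology using heredity. The $(C\otimes_kC)[-2]$ component dies because $\Ext_k^2=0$, and the $\Tor_1^k(C,C)[-1]$ component dies by purity in its tensor form.

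The step you flag as the main obstacle needs no sign bookkeeping. Since $-\Lot_k C$ is triangulated, tensoring $k\to R\to C\xrightarrow{\delta}k[1]$ with $C$ gives a distinguished triangle. Its cohomology sequence contains the exact piece $\Tor_1^k(C,C)\to k\otimes_kC\to R\otimes_kC$. The first map there is $H^{-1}(\delta\Lot_k 1_C)$, which agrees with $H^1(g)$ up to sign. The second map is injective by purity, so the first map vanishes, and that is all you need.

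\textbf{Comparison.} Your argument uses only three things: $k$ is commutative, $k$ has global dimension at most one, and $k\otimes_kC\to R\otimes_kC$ is injective. So it applies to any commutative hereditary $k$ and avoids the torsion analysis. The paper's argument needs $k$ to be a domain in which torsion-free modules are flat. In exchange, the paper's route is more elementary on the derived side. It never uses the decomposition of $C\Lot_kC$ into shifted cohomology, nor the identification of a morphism between shifted modules with its effect on cohomology.
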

\begin{proof}
Let $T$ be the torsion submodule of $C$ and write $F=C/T$. We first show that the restriction of the unit extension to $T$ splits. The torsion submodule $T_R$ of the $k$-module $R$ injects into $C$, because $k$ is torsion-free, and its image lies in $T$.

For the reverse inclusion, let $c=r+k\in T$. There is a nonzero $a\in k$ with $ar\in k$, and purity gives $aR\cap k=ak$. So $ar=ab$ for some $b\in k$, and $r-b$ is torsion and maps to $c$. Hence $T_R\to T$ is bijective. Being $k$-linear, it has a $k$-linear inverse, so we get a section $T\to R$ and the unit extension has zero restriction in $\Ext_k^1(T,k)$.

Apply $\Hom_k(-,k)$ to $0\to T\to C\to F\to0$. Exactness gives
\[
\Ext_k^1(F,k)\longrightarrow\Ext_k^1(C,k)
\longrightarrow\Ext_k^1(T,k),
\]
and the class of the unit extension lies in the kernel of the second map. Choose a preimage represented in $\D(k)$ by $\beta:F[-1]\to k$. If $\pi:C\to F$ is the quotient, then
\[
f=\beta\circ\pi[-1].
\]
Now $F$ is torsion-free over a PID, hence flat, and its derived tensor square coincides with the ordinary one. The tensor square of $\beta$ is a morphism
\[
(F\otimes_k F)[-2]\longrightarrow k,
\]
and its group of possible values is $\Ext_k^2(F\otimes_k F,k)=0$ because $\operatorname{gl.dim}k\leq1$. Therefore
\[
f\Lot_k f=(\beta\Lot_k\beta)\circ
(\pi[-1]\Lot_k\pi[-1])=0.\qedhere
\]
\end{proof}

\begin{proposition}\label{pure}
Let $k$ be a localization of $\Z$ and suppose the central unit embedding $k\hookrightarrow R$ is pure. Then for every group $G$,
\begin{equation}\label{purebounds}
\silp(kG)\leq\silp(RG)+2,\qquad
\spli(kG)\leq\spli(RG)+2.
\end{equation}
Consequently $R$ satisfies the conclusion of Theorem~\ref{intro-main}.
\end{proposition}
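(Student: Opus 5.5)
The plan is to prove \eqref{purebounds} by showing that every projective (resp.\ injective) $S$-module, $S=kG$, is a retract in $\D(S)$ of a bounded complex whose terms are restrictions of projective (resp.\ injective) $A$-modules, $A=RG$, and which is concentrated in degrees $0$ and $1$. Then \eqref{complex-id} and \eqref{complex-pd} bound the invariants. The final assertion follows from Proposition~\ref{coefficient-test} with $u=v=2$, since $k$ is a PID and hence Noetherian, and $\sfli(k)\leq1$.

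\emph{Step 1: realize the unit as a split map up to a square-zero obstruction.} Write the unit extension $0\to k\to R\to C\to0$ with class $f:C[-1]\to k$. Tensoring over $k$ with $S$ gives the $S$-bimodule triangle $S\to A\to S\otimes_k C\to$. For an $S$-module $M$, apply $-\otimes_S M$ to get a triangle $M\to A\otimes_S M\to C\otimes_k M\to$ in $\D(S)$, whose connecting map is $f\Lot_k M$. The unit $M\to A\otimes_S M$ is not split in general. Iterating once, however, the composite $M\to A\otimes_S M\to A\otimes_S A\otimes_S M$ has cone built from $C\otimes_k M$ and $C\otimes_k C\otimes_k M$. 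The obstruction to splitting the composite two-step construction is governed by $f\Lot_k f$, which vanishes by Lemma~\ref{squarezero}.

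Concretely, consider the total object $X=\mathrm{Cone}$ of the ``bar'' map $A\otimes_S M\to A\otimes_S A\otimes_S M$ (difference of the two unit insertions), a complex in degrees $0,1$. Purity of $k\hookrightarrow R$ keeps the relevant tensor products exact, so that derived and underived tensor products agree. The key claim is that $M$ is a retract of $X$ in $\D(S)$, the retraction existing precisely because $f\Lot_k f=0$.

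\emph{Step 2: estimate.} Take $M=P$ projective over $S$. Then $A\otimes_S P$ and $A\otimes_S A\otimes_S P$ are restrictions of projective $A$-modules. For the latter, $A\otimes_S A\otimes_S P\cong A\otimes_S(\text{restricted }A\otimes_S P)$, and since ${}_SA$ has $\pd\leq1$ by Lemma~\ref{restriction}, these are at least $A$-modules of $\pd_A\le 1$. If $\silp(A)=a<\infty$, Lemma~\ref{restriction} together with \eqref{restriction-estimates} gives $S$-injective dimension of the terms at most $a+1$. Then \eqref{complex-id}, with $v=1$, gives $\Hom_{\D(S)}(N,X[j])=0$ for $j>a+2$. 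Since retractions preserve this vanishing, $\id_S P\leq a+2$.

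Dually, for $I$ injective over $S$, use the coinduction $\Hom_S(A,I)$ and its iterate, and form the analogous two-term complex in degrees $-1,0$, of which $I$ is a retract. The terms have $\pd_S\leq b+1$ by \eqref{restriction-estimates} when $\spli(A)=b$. Then \eqref{complex-pd} with $u=-1$ gives $\pd_S I\leq b+2$.

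The main obstacle is Step 1: constructing the retraction $X\to M$ from $f\Lot_k f=0$. I would try a direct octahedral argument. The composite of the two connecting maps $C\otimes C\otimes M[-2]\to C\otimes M[-1]\to M$ equals $(f\Lot_k f)\Lot_k M=0$, which lets the map $M\to X$ be shown split by lifting the identity through the two-stage filtration of $X$ with graded pieces $M$, $C\otimes_k M$, and $C\otimes_k C\otimes_k M$. Controlling the degree bookkeeping so that only one extra shift, rather than two, is incurred beyond the $+1$ of Lemma~\ref{restriction} is where care is needed.
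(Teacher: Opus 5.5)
Your Step~2 fails. The failure traces back to the sentence in Step~1 that purity ``keeps the relevant tensor products exact, so that derived and underived tensor products agree.'' Purity only gives exactness of $0\to M\to R\otimes_kM\to C\otimes_kM\to0$. It does not kill $\Tor_1^k(R,R)$, which is nonzero as soon as $R$ has $k$-torsion.

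Consequently the second term $A\otimes_SA\otimes_SP\cong R\otimes_kR\otimes_kP$ of your naive two-term complex $X$ is not controlled. From $\pd_S(A\otimes_SP)\le1$ you cannot conclude $\pd_A\bigl(A\otimes_S(A\otimes_SP)\bigr)\le1$ unless $A_S$ is flat, i.e.\ unless $R$ is $k$-flat. Here is a concrete failure:
\begin{enumerate}
\item Take $k=\Z_{(p)}$ and $R=k[x]/(px)$. As a $k$-module $R=k\oplus\bigoplus_{i\ge1}(k/p)x^i$, so the unit is split, hence pure. Also $R$ is a one-dimensional complete intersection, so $\silp(R)=\spli(R)=1$.
\item As a left $R$-module, $R\otimes_kR\cong R\oplus\bigoplus_{i\ge1}R/pR$.
\item $R/pR$ has the periodic resolution $\cdots\xrightarrow{x}R\xrightarrow{p}R\to R/pR\to0$, with syzygies $pR\cong R/xR$ and $xR\cong R/pR$. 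Neither is projective, since $R$ has no nontrivial idempotents. Hence $\pd_R(R\otimes_kR)=\infty$.
\item For finite $G$, $A=RG$ is Iwanaga--Gorenstein, and $A\otimes_SA\cong A\otimes_R(R\otimes_kR)$ has infinite projective, hence infinite injective, dimension over $A$.
\end{enumerate}
So \eqref{restriction-estimates} gives no bound on the $S$-injective dimension of that term, and \eqref{complex-id} cannot be invoked. The coinduced term $\Hom_S(A,\Hom_S(A,I))\cong\Hom_k(R\otimes_kR,I)$ in your dual argument has the same defect. Separately, the retraction of $X$ onto $M$, which you call the main obstacle, is never constructed.

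The paper avoids both problems by using a different object of $\D(k)$, namely $K=\mathrm{cone}(f\Lot_kf)$. Lemma~\ref{squarezero} makes this triangle split, so $k$ is a retract of $K$ for free. The octahedral axiom applied to $f\circ(1_I\Lot_kf)=f\Lot_kf$ then gives a triangle $(R\Lot_kC)[-1]\to K\to R\to$. Apply $-\otimes_kP$ (with $G$ acting on $P$) and resolve $0\to C_1\to C_0\to C\to0$. The outer terms are then represented by the $A$-projectives $R\otimes_kC_i\otimes_kP$ in degrees $0,1$ and $R\otimes_kP$ in degree $0$. The bound $a+2$ is read off from the long exact Hom sequence of the triangle, with no flatness of $R$ needed. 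The $\spli$ half works the same way with $\RHom_k(-,E)$ and $\id_S\Hom_k(C,E)\le1$.

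Your Amitsur-type idea can be repaired in the same spirit:
\begin{enumerate}
\item Let $F$ be the fibre in $\D(k)$ of the difference map $R\to R\Lot_kR$. Purity gives $\ker(R\to R\otimes_kR)=k$, so $H^0(F)\to k$ is split by the map induced from $k\to F$.
\item Since $k$ is hereditary, $F$ is the sum of its shifted cohomology, so $k$ is a retract of $F$ in $\D(k)$.
\item Tensor with $P$ and use the triangle $F\otimes_kP\to R\otimes_kP\to(R\Lot_kR)\otimes_kP\to$. Represent $R\Lot_kR$ by $R\otimes_kR_\bullet$ for a two-term $k$-projective resolution $R_\bullet$ of $R$; this gives the same bound.
\end{enumerate}
In either version the splitting must be built in $\D(k)$ and transported by $-\otimes_kP$, since $\Ext^2_S$ need not vanish.
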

\begin{proof}
Write $C=R/k$ and $I=C[-1]$. The short exact unit sequence gives a distinguished triangle
\begin{equation}\label{unit-triangle}
I\xrightarrow{f}k\longrightarrow R\longrightarrow I[1]
\end{equation}
in $\D(k)$. Let $K$ be a cone of $f\Lot_k f:I\Lot_k I\to k$. Lemma~\ref{squarezero} says $f\Lot_k f=0$, so the cone triangle is split: there are maps
\[
k\xrightarrow{\iota}K\xrightarrow{\rho}k,
\qquad\rho\iota=1_k
\]
in $\D(k)$.

Consider the composition
\[
I\Lot_k I\xrightarrow{1_I\otimes f}I\xrightarrow{f}k.
\]
The cone of the first map is $I\Lot_k R$, by tensoring \eqref{unit-triangle} with $I$; the cone of the second is $R$; and the cone of the composition is $K$. So the octahedral axiom yields a distinguished triangle
\begin{equation}\label{octa}
(R\Lot_k C)[-1]\longrightarrow K\longrightarrow R\longrightarrow.
\end{equation}
Here we used symmetry of the derived tensor product over the commutative ring $k$. These constructions do not require $R$ to be $k$-flat.

Put $S=kG$ and $A=RG$. We prove the $\silp$ comparison first. Assume $a=\silp(A)<\infty$ and let $P$ be a projective left $S$-module. Its restriction is $k$-projective, so tensoring \eqref{octa} with $P$, with $G$ acting on $P$, gives a triangle in $\D(S)$:
\begin{equation}\label{tensor-triangle}
D[-1]\longrightarrow K\Lot_k P\longrightarrow R\otimes_k P\longrightarrow,
\qquad D=R\Lot_k C\Lot_k P.
\end{equation}
Choose a projective $k$-resolution $0\to C_1\to C_0\to C\to0$. Since $P$ is $k$-projective, $D[-1]$ is represented by
\[
R\otimes_k C_1\otimes_k P\longrightarrow R\otimes_k C_0\otimes_k P
\]
in degrees $0$ and $1$. Each $C_i\otimes_k P$ is projective over $S$ (a direct summand of a direct sum of copies of $P$), so each displayed term is projective over $A$, using $R\otimes_k(-)\cong A\otimes_S(-)$ on $S$-modules. The module $R\otimes_k P$ is $A$-projective as well.

By the definition of $a$ and \eqref{restriction-estimates}, all of these $A$-projective modules have $S$-injective dimension at most $a+1$. Equation~\eqref{complex-id} then gives, for every $S$-module $Y$,
\begin{align*}
\Hom_{\D(S)}(Y,D[-1][j])&=0&& (j>a+2),\\
\Hom_{\D(S)}(Y,(R\otimes_k P)[j])&=0&& (j>a+1).
\end{align*}
The long exact Hom sequence of \eqref{tensor-triangle} now yields
\[
\Hom_{\D(S)}(Y,(K\Lot_k P)[j])=0\qquad(j>a+2).
\]
Tensoring $\rho\iota=1_k$ with $P$ makes $P$ a retract of $K\Lot_k P$ in $\D(S)$, and the maps are $S$-linear because $G$ acts on the factor $P$. Thus $\Ext_S^j(Y,P)=0$ for $j>a+2$, so $\id_S P\leq a+2$. Taking the supremum over $P$ gives the first inequality.

We turn to the $\spli$ comparison. Assume $b=\spli(A)<\infty$ and let $E$ be an injective left $S$-module. It restricts to an injective $k$-module because $S$ is $k$-free. Apply $\RHom_k(-,E)$ to \eqref{octa}; this yields the triangle
\begin{equation}\label{homtriangle}
U\longrightarrow L\longrightarrow V[1]\longrightarrow
\end{equation}
in $\D(S)$, where
\[
U=\Hom_k(R,E),\quad L=\RHom_k(K,E),\quad
V=\RHom_k(R\Lot_k C,E).
\]
The group acts on each Hom complex through its action on $E$. Applying the contravariant functor to the retraction of $k$ from $K$ makes $E$ a retract of $L$ in $\D(S)$.

Set $X=\Hom_k(C,E)$. We claim $\id_S X\leq1$. If $P_\bullet\to Y$ is an $S$-projective resolution, then
\[
\Hom_S(P_\bullet,X)\cong\Hom_S(C\otimes_k P_\bullet,E).
\]
The restriction of $P_\bullet$ is a $k$-projective resolution, so the $j$th cohomology of the right-hand complex is $\Hom_S(\Tor_j^k(C,Y),E)$, which vanishes for $j>1$ because $\operatorname{gl.dim}k\leq1$. This proves the claim.

Derived tensor--Hom adjunction gives
\begin{equation}\label{derived-adjunction}
V\simeq\RHom_k(R,X)\simeq\RHom_S(A,X)
\end{equation}
as objects of $\D(S)$. To identify the last term with the appropriate action, choose an injective $S$-resolution of $X$; its terms restrict to injective $k$-modules, and termwise coinduction identifies $\Hom_S(A,-)$ with $\Hom_k(R,-)$, so the same resolution computes both sides. Choosing this resolution of length one, coinduction sends its terms to injective $A$-modules and $V$ is represented by a complex of injective $A$-modules in degrees $0$ and $1$. Also $U\cong\Hom_S(A,E)$ is injective over $A$.

Every term just obtained has $A$-projective dimension at most $b$, so by \eqref{restriction-estimates} its $S$-projective dimension is at most $b+1$. The complex $V[1]$ is supported in degrees $-1,0$, so \eqref{complex-pd} gives
\[
\Hom_{\D(S)}(V[1],Y[j])=0\quad(j>b+2),
\qquad
\Ext_S^j(U,Y)=0\quad(j>b+1).
\]
The long exact Hom sequence of \eqref{homtriangle} then implies
\[
\Hom_{\D(S)}(L,Y[j])=0\qquad(j>b+2)
\]
for every $S$-module $Y$. Since $E$ is a retract of $L$, this gives $\Ext_S^j(E,Y)=0$ for $j>b+2$, hence $\pd_S E\leq b+2$. Taking the supremum over injective $E$ gives the second comparison.

Both comparisons are automatic if the right-hand invariant is infinite. Finally, $k$ is Noetherian with $\sfli(k)\leq1$, so Proposition~\ref{coefficient-test} applies with $u=v=2$.
\end{proof}

\section{Gluing along central idempotents}
Write $A\in\E$ when $\silp(A)=\spli(A)$ as extended integers.
\begin{theorem}\label{gluing}
Let $\{e_i:i\in I\}$ be pairwise orthogonal central idempotents in $A$, put $J=\bigoplus_i Ae_i$ and $C=A/J$. If $Ae_i\in\E$ for every $i$ and $C\in\E$, then $A\in\E$.
\end{theorem}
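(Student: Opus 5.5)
The plan is to reduce everything to the dichotomy of Proposition~\ref{facts}(ii): if $\silp(A)$ and $\spli(A)$ are both finite they coincide, so it suffices to show two things. First, if the pieces have finite common values with a uniform bound, then $\Ggldim(A)<\infty$. Second, each invariant of $A$ dominates the corresponding invariant of every piece, up to a fixed additive constant.

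\emph{Monotonicity.} For each $i$, $A\cong Ae_i\times A(1-e_i)$ is a ring product. Modules over the factor $Ae_i$ are $A$-modules annihilated by $1-e_i$, and projectives and injectives over the factor stay projective and injective over $A$. Moreover $\Ext_A^j(X,Y)\cong\Ext_{Ae_i}^j(e_iX,Y)$ for $Ae_i$-modules $Y$. Hence $\silp(Ae_i)\leq\silp(A)$ and $\spli(Ae_i)\leq\spli(A)$.

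For $C$, the key observation is that $J=\bigoplus_iAe_i$ is a projective (hence flat) two-sided ideal with $J^2=J$. Then $\Tor^A_n(C,C)=0$ for $n\geq1$, since $\Tor_1^A(C,C)=J/J^2=0$ and $\pd_AC\leq1$. So $A\to C$ is a homological epimorphism: $\Ext_C$ and $\Ext_A$ agree on $C$-modules, and $C\Lot_AY\simeq Y$ for $C$-modules $Y$.

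Take a projective $C$-module $C\otimes_AP$. It has $\pd_A\leq1$, so by \eqref{finitepdid} it has $\id_A\leq\silp(A)$ when the latter is finite, which gives $\silp(C)\leq\silp(A)$. Take an injective $C$-module $I$. By adjunction, $\RHom_A(X,I)\simeq\RHom_C(C\Lot_AX,I)$ and $C\Lot_AX$ has cohomology in two degrees, so $\id_AI\leq1$. Resolving $I$ by two $A$-injectives gives $\pd_AI\leq\spli(A)+1$. Finally $C\Lot_A(-)$ carries an $A$-projective resolution of $I$ to a $C$-projective one, so $\spli(C)\leq\spli(A)+1$.

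\emph{Case split.} Put $d=\sup_i\silp(Ae_i)$ and $c=\silp(C)$; by hypothesis these also equal the corresponding $\spli$ values. If $d=\infty$ or $c=\infty$, monotonicity makes both $\silp(A)$ and $\spli(A)$ infinite, so $A\in\E$. Assume now $d,c<\infty$. Then $\Ggldim(Ae_i)\leq d$ for all $i$ and $\Ggldim(C)=c$ by \eqref{maxclass}.

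For an arbitrary $A$-module $M$, use the exact sequence $0\to JM\to M\to M/JM\to0$ with $JM=\bigoplus_ie_iM$. The Gorenstein projective dimension over $A$ of an $Ae_i$-module equals that over the factor. Since Gorenstein projectives are closed under direct sums \cite{Holm}, $\Gpd_AJM\leq d$. For the $C$-module $M/JM$, I would induce a totally acyclic complex of $C$-projectives back along the homological epimorphism, using $\pd_AC\leq1$ and the agreement of $\Ext$. The aim is that a Gorenstein projective $C$-module has $\Gpd_A\leq1$, so $\Gpd_A(M/JM)\leq c+1$. Closure of bounded Gorenstein projective dimension under extensions then gives $\Gpd_AM\leq\max\{d,c+1\}+1$. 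Hence $\Ggldim(A)<\infty$, both invariants are finite by \eqref{maxclass}, and they are equal by Proposition~\ref{facts}(ii).

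The main obstacle is the transfer step $\Gpd_A(N)\leq\Gpd_C(N)+1$ for $C$-modules $N$. If the direct Gorenstein argument proves awkward, I would instead bound $\silp(A)$ and $\spli(A)$ directly using the same short exact sequence. For $\spli$, decompose an injective $A$-module $E$ and use $\pd$ of the sum $\bigoplus e_iE$ together with $\pd_A$ of $C$-modules via the $\pd_AC\leq1$ estimate. This route is likely cleaner because projective dimension behaves well under direct sums.
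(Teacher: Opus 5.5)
\textbf{Where the argument breaks.} Your monotonicity estimates and the case $d=\infty$ or $c=\infty$ are correct. In the remaining case, however, you aim to show that $d,c<\infty$ forces $\Ggldim(A)<\infty$, and that claim is false. So neither the transfer step $\Gpd_AN\leq\Gpd_CN+1$ nor your fallback can be carried out.

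Here is a counterexample. Let $K$ be a field and $C=K[y]/(y^2)$. For $i\in\mathbb N$ put $A_i=C$ and $\phi_i=\mathrm{id}$ if $i$ is even, and $A_i=K$ with $\phi_i(y)=0$ if $i$ is odd. Let $A\subseteq\hat A=\prod_iA_i$ be the ring of sequences that agree with $(\phi_i(c))_i$ at almost all $i$, for some $c\in C$, and take the $e_i$ to be the coordinate idempotents. Then $Ae_i\cong A_i$, and $A/J\cong C$ because $C\to\hat A/J$ is injective (look at the even coordinates). All these rings are quasi-Frobenius, so $d=c=0$.

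On the other hand, $\hat A\cong\prod_iAe_i$ is $A$-injective, since each $Ae_i$ is a self-injective factor ring. Also $J$ is $A$-projective. The $C$-module $\hat C=\hat A/J$ has a nonzero summand, coming from the odd coordinates, that is annihilated by $y$. Over the quasi-Frobenius ring $C$, finite projective dimension forces projectivity, so $\pd_C\hat C=\infty$. As $C$ is $A$-flat and $C\otimes_A\hat C=\hat C$, we get $\pd_A\hat C\geq\pd_C\hat C$. Hence $\pd_A\hat A=\infty$, so $\spli(A)=\infty$ and $\Ggldim(A)=\infty$.

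If your transfer step held, the sequence $0\to JM\to M\to M/JM\to0$ would give $\Ggldim(A)\leq2$. The fallback fails at $E=\hat A$: there $E/JE=\hat C$ has infinite $C$-projective dimension although $c=0$. This is exactly the obstruction recorded in the paper's remark: $E/JE=C\otimes_AE$ need not be $C$-injective, and nothing about the pieces alone controls it.

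\textbf{What is missing.} The theorem makes no finiteness claim; in the example both invariants of $A$ are infinite, hence equal. The paper's proof case-splits on the invariants of $A$ itself, and uses that finiteness to control $E/JE$.

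If $n=\silp(A)<\infty$, the factors give $\pd_AJE\leq n$, hence $\id_AJE\leq n$ by \eqref{finitepdid}. So $E/JE$ has finite injective dimension over $C$, and Proposition~\ref{facts}(iv) gives $\pd_C(E/JE)=\Gpd_C(E/JE)\leq n$. This yields $\sfli(A)\leq n$, and \eqref{maxflat} together with $\FPD(A)\leq n$ bounds $\Ggldim(A)$.

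If $m=\spli(A)<\infty$, then $\pd_C(C\otimes_AL)\leq m$ for every injective left $A$-module $L$. This is what lets Gorenstein flatness pass from $C^{\op}$ to $A^{\op}$. Extension closure and \eqref{injgf} then give $\sfli(A^{\op})\leq m$, and Proposition~\ref{facts}(vi) yields $\silp(A)=\spli(A)$.

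If neither invariant is finite, they agree trivially. Your bounds on the pieces only become usable once one invariant of $A$ has been assumed finite.
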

\begin{proof}
For a finite subset $F\subseteq I$ write $e_F=\sum_{i\in F}e_i$ and set
\[
\mathcal S=\{1-e_F:F\subseteq I\text{ finite}\}.
\]
By orthogonality $(1-e_F)(1-e_H)=1-e_{F\cup H}$, so $\mathcal S$ is a multiplicative set of central elements. Inverting $1-e_{\{i\}}$ kills $e_i$, since their product is zero, while every element of $\mathcal S$ is congruent to $1$ modulo $J$. Hence
\begin{equation}\label{quotient-localization}
C=A/J\cong\mathcal S^{-1}A.
\end{equation}
In particular $C$ is flat on both sides. If $J=A$, some finite $e_F$ equals $1$ and $C$ is zero; the same argument and the bounds below then apply with zero modules omitted.

The category of modules over the central factor $Ae_i$ is a direct factor of the category of $A$-modules, and its projectives and injectives are projective and injective over $A$. Applying the projection $M\mapsto e_iM$ to resolutions also shows that a module supported on this factor has the same projective or injective dimension over $Ae_i$ as over $A$. So both invariants of $Ae_i$ are bounded by the corresponding invariants of $A$. We treat the two finite cases separately.

Assume first that $n=\silp(A)<\infty$. The hypothesis $Ae_i\in\E$ and the factor comparison give
\[
\silp(Ae_i)=\spli(Ae_i)\leq n.
\]
Lemma~\ref{localization}, \eqref{quotient-localization} and $C\in\E$ give the same bound for both invariants of $C$, so Proposition~\ref{facts}(ii) yields
\begin{equation}\label{first-component-bounds}
\Ggldim(Ae_i)\leq n,
\qquad\Ggldim(C)\leq n.
\end{equation}

Let $E$ be an injective left $A$-module. Orthogonality identifies $JE$ with the internal direct sum $\bigoplus_i e_iE$: elements of $J$ have only finitely many nonzero components, and multiplication by $e_j$ picks out the $j$th component. Each $e_iE$ is injective over $Ae_i$ and therefore has projective dimension at most $n$ there; since $Ae_i$-projectives are $A$-projective, the direct sum of the corresponding resolutions gives $\pd_A JE\leq n$. By \eqref{finitepdid}, $\id_A JE\leq n$.

Consider
\begin{equation}\label{leftseq}
0\longrightarrow JE\longrightarrow E\longrightarrow Q\longrightarrow0.
\end{equation}
For $j\geq1$ the Ext sequence gives $\Ext_A^j(Y,Q)\cong\Ext_A^{j+1}(Y,JE)$, so $\id_A Q<\infty$; in fact $\id_A Q\leq\max\{0,n-1\}$. The module $Q$ is annihilated by $J$ and is therefore a $C$-module, and the Ext comparison of Lemma~\ref{localization} gives $\Ext_C^j(Y,Q)\cong\Ext_A^j(Y,Q)$ for every $C$-module $Y$. So $\id_C Q<\infty$.

Proposition~\ref{facts}(iv) together with \eqref{first-component-bounds} gives $\pd_C Q=\Gpd_C Q\leq n$. A $C$-projective resolution of length $n$ restricts to an $A$-flat resolution, since $C$ is $A$-flat, so $\fd_A Q\leq n$. We already have $\fd_A JE\leq\pd_A JE\leq n$, and the Tor sequence of \eqref{leftseq} gives $\fd_A E\leq n$. As $E$ was arbitrary, $\sfli(A)\leq n$. Also $\FPD(A)\leq n$ by Proposition~\ref{facts}(i), and \eqref{maxflat} gives $\Ggldim(A)\leq n$. Both classical invariants are finite and hence equal.

Assume next that $m=\spli(A)<\infty$. The factor and localization comparisons, now applied to $\spli$, give
\begin{equation}\label{cbounds}
\Ggldim(Ae_i)\leq m,
\qquad\Ggldim(C)\leq m.
\end{equation}
By Proposition~\ref{facts}(iii), each of these rings has Gorenstein weak global dimension at most $m$ on either side.

We need one restriction property on the right. Let $H$ be Gorenstein flat over $C^{\op}$ and take a complete flat resolution $F_\bullet$ of $H$ by right $C$-modules. It remains exact as a complex of flat right $A$-modules. If $L$ is an injective left $A$-module, then $\pd_A L\leq m$; tensoring a projective $A$-resolution of $L$ with the flat right $A$-module $C$ gives a projective $C$-resolution of $C\otimes_A L$, hence $\pd_C(C\otimes_A L)\leq m$. An exact complex of flat right $C$-modules remains exact after tensoring with a left module of finite flat dimension; this is the dimension-shifting argument used in \cite[Section 5]{DE}. Consequently
\[
F_\bullet\otimes_A L\cong
F_\bullet\otimes_C(C\otimes_A L)
\]
is exact, which proves that $H$ is Gorenstein flat over $A^{\op}$. Restricting finite resolutions now gives
\begin{equation}\label{gf-restrict}
\Gfd_{A^{\op}}Y\leq\Gfd_{C^{\op}}Y
\end{equation}
for any right $C$-module of finite Gorenstein flat dimension.

Let $E$ be an injective right $A$-module. Each $Ee_i$ is injective over $(Ae_i)^{\op}$, and the symmetric weak-dimension bound following \eqref{cbounds} gives $\fd_{(Ae_i)^{\op}}Ee_i\leq m$. Restricting the corresponding flat resolutions and taking the direct sum gives
\[
\fd_{A^{\op}}EJ\leq m.
\]
The quotient $Q=E/EJ$ is a right $C$-module, so \eqref{cbounds} and \eqref{gf-restrict} give
\[
\Gfd_{C^{\op}}Q\leq m,
\qquad\Gfd_{A^{\op}}Q\leq m.
\]
Using extension closure of modules of Gorenstein flat dimension at most $m$ \cite{SS,EmmApprox} in the sequence
\begin{equation}\label{rightseq}
0\longrightarrow EJ\longrightarrow E\longrightarrow Q\longrightarrow0,
\end{equation}
we get $\Gfd_{A^{\op}}E\leq m$. Since $E$ is injective, \eqref{injgf} gives $\fd_{A^{\op}}E\leq m$, so $\sfli(A^{\op})\leq m$.

Now $\spli(A)<\infty$ and $\sfli(A^{\op})<\infty$, so \cite[Corollary 22]{DE}, recorded as Proposition~\ref{facts}(vi), gives $\silp(A)=\spli(A)$. If neither invariant of $A$ is finite, both are infinite. The theorem follows in all cases.
\end{proof}

\begin{remark}
No infinite-product decomposition of $A$ is being asserted. Nor does the proof identify $C\otimes_A E=E/JE$ with $\Hom_A(C,E)$: the latter is $C$-injective when $E$ is injective, whereas the former need not be. The two finiteness cases above use different arguments precisely to avoid these substitutions.
\end{remark}

\section{Arithmetic decomposition and the main theorem}
\begin{lemma}\label{stable}
If $p$ is prime and $p^aR=p^{a+1}R$ for some $a\geq1$, there is a central idempotent $e$ with $p^ae=0$ and $p$ invertible in $(1-e)R$. If $p$ is not invertible in $R$ then $e\ne0$. This central idempotent is unique among those satisfying the two conditions for some power of $p$.
\end{lemma}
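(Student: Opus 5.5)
The plan is to build the idempotent from the stabilization of the ideal chain $p^aR$. Put $x=p^a$; $x$ is central since it is an integer. From $p^aR=p^{a+1}R$ we get $p^aR=p^{a+j}R$ for all $j\ge 0$, in particular $xR=x^2R$. So there is $r\in R$ with $x=x^2r$.

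First step: arrange that $r$ is central and commutes with $x$. Replace $r$ by $t=xr^2$. Then
\[
x^2t=x^3r^2=x(x^2r)r=x^2r=x,
\]
and $t=x\cdot r\cdot r$. Put $e'=xt$. Then $e'^2=x^2t^2=(x^2t)t=xt=e'$, so $e'$ is idempotent.

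Centrality is the delicate point. For any $s\in R$ we have $xs=sx$, and $x=x^2r$ gives $xR=x^2R$ as a two-sided ideal. I would check $e'$ is central through the identity $e's=e'se'=se'$. Here $e'$ is the unit of the ring $xR$: indeed $e'x=x^2t=x$, so $e'$ acts as identity on $xR$ from the left, and similarly from the right since $x$ is central. For $s\in R$, $e's$ and $se'$ both lie in $xR$. Write $e's=e'(se')+e's(1-e')$ and observe $e's(1-e')=xts(1-e')$; the factor $x$ is central, so this equals $ts(1-e')x$. Since $(1-e')x=0$, this vanishes. Symmetrically $(1-e')se'=0$. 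Hence $e's=e'se'=se'$, and $e'$ is central. This centrality check is the main obstacle; the key input is that $x$ is central and $(1-e')x=0$.

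Now set $e=1-e'$.
\begin{itemize}
\item $p^ae=x(1-e')=0$.
\item In $(1-e)R=e'R$, the element $x$ is invertible with inverse $e't$ (using $x\cdot e't=e'$), so $p$ is invertible in $e'R$.
\item If $e=0$ then $e'=1$ and $p$ is invertible in $R$. So if $p$ is not invertible in $R$, then $e\ne0$.
\end{itemize}

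For uniqueness, suppose $f$ is central idempotent with $p^bf=0$ and $p$ invertible in $(1-f)R$.
\begin{itemize}
\item In the ring $eR(1-f)$, the element $p$ is both invertible (as a corner of $(1-f)R$) and nilpotent, since $p^ae=0$. Hence $e(1-f)=0$, that is, $e=ef$.
\item Symmetrically $f(1-e)=0$, so $f=fe$.
\end{itemize}
Therefore $e=ef=fe=f$.
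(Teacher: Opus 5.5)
Your proof is correct and follows essentially the paper's route. You produce an idempotent generating the two-sided ideal $p^aR$ (you use $x=x^2r$ with $x=p^a$, while the paper uses $u=(pb)^a$ from $p^a=p^{a+1}b$), prove centrality through $e's(1-e')=0=(1-e')se'$, and obtain uniqueness from the corner $e(1-f)R$, where $p$ is both nilpotent and invertible.

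One small remark: your announced step of making $r$ central is never verified, but it is also not needed. Already $xr$ is idempotent (indeed $xt=x^2r^2=xr$), and your centrality argument uses only $e'=xt$, the centrality of $x$, and $x^2t=x$.
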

\begin{proof}
Choose $b\in R$ with $p^a=p^{a+1}b$ and set $x=pb$. Then $p^a(1-x)=0$. Multiplying by $b^a$ gives $x^a(1-x)=0$, so $x^a=x^{a+1}$; induction gives $x^a=x^{a+t}$ for every $t\geq0$. In particular $u=x^a$ is idempotent.

Since $p$ is central, $u=p^ab^a$ lies in $p^aR$. On the other hand $p^a=p^ax$ gives $p^a=p^ax^a=p^au$, so $uR=p^aR$; likewise $u\in Rp^a$ and $p^a=up^a$, so $Ru=Rp^a=p^aR$. Hence $uR=Ru$ is a two-sided ideal.

For any $r\in R$, the inclusion $ur\in Ru$ gives $ur(1-u)=0$ and the inclusion $ru\in uR$ gives $(1-u)ru=0$, so $ur=uru=ru$. Thus $u$ is central. Since $xu=u$,
\[
p(bu)=u=(bu)p
\]
in the ring $uR$, whose identity is $u$, so $p$ is invertible there. Taking $e=1-u$ gives $p^ae=p^a-p^au=0$. If $e=0$ the preceding computation makes $p$ invertible in $R$, which proves the nonzero claim.

For uniqueness, suppose $e$ and $f$ are central idempotents satisfying the conditions, possibly with different exponents. In the central factor $e(1-f)R$ a power of $p$ is zero, because it is zero on $eR$, while $p$ is invertible, because it is invertible on $(1-f)R$. Its identity is therefore zero, so $e(1-f)=0$; reversing the roles gives $f(1-e)=0$. Hence $e=ef=f$.
\end{proof}

\begin{proposition}\label{arithmetic}
Suppose $R$ has characteristic zero. Set
\[
\Sigma=\{p\text{ prime}:p\notin R^\times,\ p^aR=p^{a+1}R\text{ for some }a\geq1\}.
\]
Let $e_p$ be as in Lemma~\ref{stable} and put $J=\bigoplus_{p\in\Sigma}Re_p$, $R'=R/J$. Then:
\begin{enumerate}
\item the $e_p$ are pairwise orthogonal and $R'\cong U^{-1}R$, where $U$ is generated by $\Sigma$;
\item $R'$ has characteristic zero;
\item if $q^aR'=q^{a+1}R'$ for a prime $q$ and some $a\geq1$, then $q$ is invertible in $R'$;
\item if $V$ is generated by the primes invertible in $R'$, the embedding $\Z[V^{-1}]\hookrightarrow R'$ is pure.
\end{enumerate}
\end{proposition}
\begin{proof}
For each $p\in\Sigma$ choose $a_p\geq1$ with $p^{a_p}e_p=0$. If $p\ne q$, both $p^{a_p}$ and $q^{a_q}$ annihilate $e_pe_q$; these integers are relatively prime, so Bezout gives $e_pe_q=0$. The sum defining $J$ is therefore an internal direct sum.

Let $U$ be the multiplicative set generated by $\Sigma$. Each $e_p$ maps to zero in $U^{-1}R$. Conversely, suppose $ur=0$ with $u\in U$. Only finitely many primes occur in $u$; call this set $F$ and set $e_F=\sum_{p\in F}e_p$. Every $p\in F$ is invertible on $(1-e_F)R$, since this is a central factor of $(1-e_p)R$, so $u$ is invertible there and $ur=0$ implies $(1-e_F)r=0$. Hence $r\in Re_F\subseteq J$, which proves that the kernel of $R\to U^{-1}R$ is $J$.

Each $p\in\Sigma$ is invertible modulo $J$: it is invertible on $(1-e_p)R$ and $e_p$ is zero in $R/J$. The universal property of localization now gives $R/J\cong U^{-1}R$, which is (i).

For (ii), suppose a nonzero integer $n$ maps to zero in $R'$. Then $n1_R\in J$, so $n1_R\in Re_F$ for some finite $F$. A nonzero product $v$ of the powers $p^{a_p}$, $p\in F$, annihilates $e_F$, so $vn1_R=0$, contradicting characteristic zero since $vn\ne0$. Thus $R'$ has characteristic zero.

For (iii), suppose $q^aR'=q^{a+1}R'$ with $q$ not a unit in $R'$. By (i), $q\notin\Sigma$. The equality of ideals gives $q^a\in q^{a+1}U^{-1}R$; clearing a denominator, including one that kills a possible localization kernel, gives
\[
uq^a\in q^{a+1}R
\quad\text{for some }u\in U.
\]
Since $q\notin\Sigma$, the integer $u$ is relatively prime to $q$. Choose $\alpha,\beta$ with $\alpha u+\beta q=1$ and multiply by $q^a$; then $q^a\in q^{a+1}R$, so $q^aR=q^{a+1}R$. Since $q$ is a nonunit in $R$ (it is a nonunit in the quotient $R'$), this forces $q\in\Sigma$, a contradiction.

Finally let $k=\Z[V^{-1}]$. Its map into $R'$ is injective by (ii) and central because it comes from integer scalars. Over a PID an inclusion $k\subseteq R'$ is pure if and only if
\begin{equation}\label{relative-divisibility}
aR'\cap k=ak\qquad(a\in k);
\end{equation}
this is the relative-divisibility criterion, obtained from diagonal reduction of finite matrices over a PID; see \cite{EJ}. After multiplying by the units used as denominators in $k$, it suffices to consider
\begin{equation}\label{integer-equation}
nr=b1_{R'},\qquad n>0,\quad b\in\Z,
\end{equation}
and to prove $b/n\in k$. For $b=0$ this is clear. Otherwise write $n=gu$, $b=gv$ with $g>0$ and $\gcd(u,v)=1$, and choose $\alpha,\beta\in\Z$ with $\alpha u+\beta v=1$. Equation~\eqref{integer-equation} gives
\[
g=\alpha gu+\beta gv=gu(\alpha+\beta r),
\]
so $g\in guR'$.

Let $p$ divide $u$ and write $g=p^tw$ with $t\geq0$ and $p\nmid w$. The membership above gives $g\in p^{t+1}R'$. Choose $\gamma,\delta$ with $\gamma w+\delta p^{t+1}=1$ and multiply by $p^t$; then
\[
p^t=\gamma g+\delta p^{2t+1}\in p^{t+1}R'.
\]
If $t=0$ this says $p$ is a unit. If $t\geq1$ it gives $p^tR'=p^{t+1}R'$, and (iii) again says $p$ is a unit. Every prime divisor of $u$ is therefore inverted in $k$, so $b/n=v/u\in k$. This proves \eqref{relative-divisibility} and hence (iv).
\end{proof}

Now we are ready to state the main result of this paper.
\begin{theorem}\label{main}
For every ring $R$ and every group $G$,
\[
\silp(R)=\spli(R)\quad\Longrightarrow\quad\silp(RG)=\spli(RG).
\]
\end{theorem}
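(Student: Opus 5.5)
The plan is to split by characteristic and then, in characteristic zero, reduce to the pure-embedding case by gluing along the central idempotents $e_p$.

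\textbf{Case 1: finite characteristic.} If $R$ has characteristic $n>1$, Proposition~\ref{finitechar} already gives the conclusion, so we may assume $R$ has characteristic zero.

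\textbf{Case 2: characteristic zero.} If the common value $\silp(R)=\spli(R)$ is infinite, Lemma~\ref{subgroup} with $H=1$ makes both invariants of $RG$ infinite. So assume the common value is $d<\infty$. Let $\Sigma$, $e_p$, $J$ and $R'=R/J\cong U^{-1}R$ be as in Proposition~\ref{arithmetic}. Passing to group rings, I will use:
\begin{itemize}
\item the $e_p$ remain pairwise orthogonal central idempotents of $A=RG$;
\item $\bigoplus_p Ae_p=JG$;
\item $A/JG\cong R'G\cong U^{-1}(RG)$, a central localization.
\end{itemize}
The goal is to apply Theorem~\ref{gluing} to $A$ with the idempotents $e_p$. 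This needs $(Re_p)G\in\E$ for each $p\in\Sigma$ and $R'G\in\E$.

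\textbf{The factors $Re_p$.} Since $p^{a_p}e_p=0$, the ring $Re_p$ has finite characteristic, a divisor of $p^{a_p}$, and it is nonzero by Lemma~\ref{stable}. It is a central factor of $R$, so the factor comparison at the start of the proof of Theorem~\ref{gluing} gives that both invariants of $Re_p$ are at most $d$. They are then finite, hence equal by Proposition~\ref{facts}(ii), so $Re_p\in\E$. Proposition~\ref{finitechar} now gives $(Re_p)G=(RG)e_p\in\E$.

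\textbf{The quotient $R'$.} By Lemma~\ref{localization}, both invariants of $R'=U^{-1}R$ are at most $d$, so again they are finite and equal, and $R'\in\E$. By Proposition~\ref{arithmetic}(ii) and (iv), $R'$ has characteristic zero and the embedding $k=\Z[V^{-1}]\hookrightarrow R'$ is a pure central unit embedding, with $k$ a localization of $\Z$. Proposition~\ref{pure} then yields $R'G\in\E$.

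\textbf{Gluing.} Theorem~\ref{gluing} applied to $A=RG$ with the $e_p$ gives $RG\in\E$, which is the claim.

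\textbf{Expected main obstacle.} The delicate point is the identification $RG/JG\cong R'G$ together with the requirement that $JG$ be exactly $\bigoplus_p (RG)e_p$ as used in Theorem~\ref{gluing}. I would check this directly: $(RG)e_p=(Re_p)G$ because $e_p$ is central and commutes with group elements, and the group ring functor preserves direct sums and quotients. This step is routine, but it is where care is needed. Everything else is assembled from earlier results.
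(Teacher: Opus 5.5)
Your proposal is correct and follows the paper's proof essentially step for step. You dispose of the infinite case via Lemma~\ref{subgroup} and of positive characteristic via Proposition~\ref{finitechar}. In characteristic zero you obtain $(Re_p)G\in\E$ from Proposition~\ref{finitechar} and $R'G\in\E$ from Lemma~\ref{localization} and Proposition~\ref{pure}, and then glue with Theorem~\ref{gluing} using $Ae_p\cong(Re_p)G$ and $A/\bigoplus_p Ae_p\cong R'G$.
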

\begin{proof}
Assume $\silp(R)=\spli(R)$. If the common value is infinite, Lemma~\ref{subgroup} applied to the trivial subgroup gives $\silp(RG)=\spli(RG)=\infty$. So suppose the common value is $d<\infty$.

If $R$ has positive characteristic, Proposition~\ref{finitechar} applies directly; the zero ring causes no difficulty, so we may assume characteristic zero. Apply Proposition~\ref{arithmetic}, with its notation $e_p$, $J$, $R'$ and $k=\Z[V^{-1}]$.

Each $Re_p$ is a central direct factor of $R$, and its projective and injective modules are supported on that factor with the same dimensions over $R$. So both invariants of $Re_p$ are at most $d$, and Proposition~\ref{facts}(ii) makes them equal. The characteristic of $Re_p$ is positive, since a power of $p$ annihilates its identity $e_p$, so Proposition~\ref{finitechar} gives
\[
\silp((Re_p)G)=\spli((Re_p)G).
\]

The quotient $R'$ is a central localization of $R$, so Lemma~\ref{localization} gives $\silp(R'),\spli(R')\leq d$; these are finite and hence equal. Proposition~\ref{arithmetic}(iv) supplies a pure central embedding $k\hookrightarrow R'$, so the pure unit comparison, Proposition~\ref{pure}, gives
\[
\silp(R'G)=\spli(R'G).
\]

Put $A=RG$. The elements $e_p$ are still pairwise orthogonal central idempotents in $A$, and coefficientwise projection gives
\[
Ae_p\cong(Re_p)G,
\qquad
A/\bigoplus_{p\in\Sigma}Ae_p\cong R'G.
\]
For the quotient isomorphism, the kernel consists of finite sums whose coefficients lie in $J$, and finitely many coefficients involve only finitely many summands of $J$. Every factor and the quotient satisfy equality, so Theorem~\ref{gluing} gives $\silp(A)=\spli(A)$.
\end{proof}

\begin{remark}
The hypothesis involves left modules only; no equality is assumed for $R^{\op}$. The right flat-dimension estimates follow from finite left Gorenstein global dimension through Proposition~\ref{facts}(iii), rather than from any presumed symmetry of Gorenstein global dimension.
\end{remark}

\section{Finite values and group extensions}
Theorem~\ref{main} asserts an equality of extended integers; by itself it says nothing about finiteness of the common value. We make this distinction explicit before proving the group-extension estimate. As usual, $\Gcd_\Z G=\Gpd_{\Z G}\Z$ and $\Ghd_\Z G=\Gfd_{\Z G}\Z$, with $\Z$ acting trivially.

\begin{corollary}\label{criterion}
Assume $\silp(R)=\spli(R)=d<\infty$ and $h=\Ghd_\Z G<\infty$. For $A=RG$,
\[
\sfli(A),\sfli(A^{\op})\leq d+h,
\]
and
\[
\silp(A)=\spli(A)<\infty\quad\Longleftrightarrow\quad\FPD(A)<\infty.
\]
As extended integers the common value is $\max\{\sfli(A),\FPD(A)\}$.
\end{corollary}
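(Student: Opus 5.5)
The proof has three steps: bound $\sfli$ on both sides of $A$, deduce the finiteness criterion, and then identify the common value.

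\textbf{Step 1: the $\sfli$ bounds.} These come from Lemma~\ref{tensorbound} with $k=\Z$. We need a weak characteristic module for $\Z G$ of flat dimension $h$.

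\begin{itemize}
\item The ring $\Z$ is commutative Noetherian with $\sfli(\Z)\le1<\infty$.
\item Since $h=\Ghd_\Z G<\infty$, \cite[Corollary 3.8]{KS} supplies a $\Z$-flat $\Z G$-module $W$ with $\fd_{\Z G}W=h$ and a $\Z$-pure $\Z G$-monomorphism $\Z\to W$.
\item Any ring is a central $\Z$-algebra, so Lemma~\ref{tensorbound} gives $\sfli(A)\le\sfli(R)+h$ and $\sfli(A^{\op})\le\sfli(R^{\op})+h$.
\item Since $\silp(R)=\spli(R)=d$ is finite, Proposition~\ref{facts}(ii) gives $\Ggldim(R)=d$. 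Proposition~\ref{facts}(iii) then gives $\sfli(R),\sfli(R^{\op})\le d$.
\end{itemize}

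Together these yield $\sfli(A),\sfli(A^{\op})\le d+h$.

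\textbf{Step 2: equality and the finiteness criterion.} Both values of $\sfli$ are finite, so \eqref{twosfli} gives $\silp(A)=\spli(A)$ as extended integers.

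\begin{itemize}
\item If the common value is finite, Proposition~\ref{facts}(i) gives $\FPD(A)\le\silp(A)<\infty$.
\item Conversely, suppose $\FPD(A)<\infty$. Then \eqref{maxflat} gives $\Ggldim(A)=\max\{\sfli(A),\FPD(A)\}<\infty$.
\item By \eqref{maxclass}, both $\silp(A)$ and $\spli(A)$ are then bounded by this finite number.
\end{itemize}

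\textbf{Step 3: the common value.} Write $c$ for the common value of $\silp(A)$ and $\spli(A)$. By \eqref{maxclass}, $\Ggldim(A)=\max\{\silp(A),\spli(A)\}=c$. By \eqref{maxflat}, the same quantity equals $\max\{\sfli(A),\FPD(A)\}$. This holds in the infinite case as well, since both formulas are identities of extended integers.

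The only step needing care is the invocation of \cite[Corollary 3.8]{KS}: we must check that its hypotheses are $\sfli(k)<\infty$ and $\Ghd_kG<\infty$, and that the module it produces has flat dimension exactly $h$ as stated in Section~3. If a bound $\fd W\le h$ were all we had, the estimate $d+h$ would still follow, since Lemma~\ref{tensorbound} only uses an upper bound. Nothing else is an obstacle, because everything else is a direct combination of Proposition~\ref{facts}.
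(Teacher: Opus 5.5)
Your proof is correct. The $\sfli$ bounds, and the identification of the common value with $\max\{\sfli(A),\FPD(A)\}$ via \eqref{maxclass} and \eqref{maxflat}, follow the paper's argument.

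The one real difference is how you get $\silp(A)=\spli(A)$. The paper cites Theorem~\ref{main}. You instead apply \eqref{twosfli} to the two-sided bound $\sfli(A),\sfli(A^{\op})\leq d+h$ you had just proved. Your route is the more economical one: it is exactly the $h<\infty$ branch of the proof of Proposition~\ref{coefficient-test}, with $k=\Z$. It makes the corollary independent of the arithmetic decomposition and gluing argument behind Theorem~\ref{main}. The paper's citation is shorter to write but logically heavier.

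Your worry about \cite[Corollary 3.8]{KS} does not arise. Section~3 records its hypotheses as $\sfli(k)<\infty$ and $\Ghd_kG<\infty$, and both hold for $k=\Z$. As you note, Lemma~\ref{tensorbound} only uses the upper bound $\fd_{\Z G}W\leq h$.
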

\begin{proof}
Proposition~\ref{facts}(ii) and (iii) gives $\sfli(R),\sfli(R^{\op})\leq d$. Applying the weak characteristic-module theorem over $\Z$ and then Lemma~\ref{tensorbound} yields the two flat-dimension bounds. By Theorem~\ref{main}, the two classical invariants of $A$ have a common value $t$, and \eqref{maxclass} identifies $t$ with $\Ggldim(A)$, infinity included. Equation~\eqref{maxflat} then gives
\[
t=\max\{\sfli(A),\FPD(A)\}.
\]
Since $\sfli(A)\leq d+h<\infty$, this maximum is finite exactly when $\FPD(A)$ is finite. Both the equivalence and the formula follow.
\end{proof}

\begin{theorem}\label{extension}
Let $1\to N\to G\to Q\to1$ be a group extension. If
\[
\silp(RN)=\spli(RN)=c<\infty,\qquad q=\Gcd_\Z Q<\infty,
\]
then
\[
c\leq\silp(RG)=\spli(RG)\leq c+q.
\]
\end{theorem}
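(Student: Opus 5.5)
The lower bound $c\leq\silp(RG)=\spli(RG)$ comes from the earlier results. Lemma~\ref{subgroup} with $H=N$ gives $c=\silp(RN)\leq\silp(RG)$ and $c=\spli(RN)\leq\spli(RG)$. For equality of the two invariants of $RG$, I would not invoke Theorem~\ref{main} directly, since its hypothesis concerns $R$ and not $RN$. Instead, equality will follow from finiteness. Once both invariants are shown to be at most $c+q$, Proposition~\ref{facts}(ii) says that two finite values coincide. So the real task is the upper bound: $\spli(RG)\leq c+q$ and $\silp(RG)\leq c+q$.

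For the upper bound I would use a Gorenstein projective resolution of the trivial module over $\Z Q$. Since $q=\Gcd_\Z Q<\infty$, there is an exact sequence $0\to M_q\to\cdots\to M_0\to\Z\to0$ of Gorenstein projective $\Z Q$-modules. I would in fact take $M_0,\dots,M_{q-1}$ projective and only $M_q$ Gorenstein projective. Inflate it to $G$ via $G\to Q$. For an $RG$-module $X$, tensor over $\Z$ with the diagonal $G$-action to get an exact sequence
\[
0\to M_q\otimes_\Z X\to\cdots\to M_0\otimes_\Z X\to X\to0 .
\]
This sequence is exact because a Gorenstein projective $\Z Q$-module is $\Z$-free, being a submodule of a projective. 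For a free $\Z Q$-module, $\Z Q\otimes_\Z X\cong RG\otimes_{RN}X$ by the untwisting isomorphism \eqref{untwist}. Suppose $X$ is injective over $RG$. Then $X$ restricts to an injective $RN$-module, since $RG$ is free over $RN$. Hence $\pd_{RN}X\leq c$, and induction preserves this bound, so $\pd_{RG}(M_i\otimes_\Z X)\leq c$ for $i<q$.

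The last term $M_q\otimes_\Z X$ is the main obstacle. I would embed $M_q$ into a free $\Z Q$-module $F$ with Gorenstein projective cokernel $M'$. Then I would show that $\Ext^{j}_{RG}(M'\otimes_\Z X,-)$ vanishes for large $j$ by a dimension-shifting argument along a complete resolution of $M'$. Once $M'\otimes_\Z X$ has finite projective dimension, the sequence $0\to M_q\otimes X\to F\otimes X\to M'\otimes X\to 0$ gives the bound $\pd_{RG}(M_q\otimes_\Z X)\leq c$. The vanishing should hold because every term of the complete resolution, tensored with $X$, has projective dimension at most $c$, and $\spli(RG)$ finiteness is not yet known. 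If this is too delicate, I would instead prove only that $\pd_{RG}X<\infty$, then bound it using Gorenstein dimensions: $\Gpd$ of $M_q\otimes X$ is at most $c$, and Proposition~\ref{facts}(iv) turns the Gorenstein bound into an honest one, because $X$ is injective so $\Gpd=\pd$. Splicing the sequence then gives $\pd_{RG}X\leq c+q$, that is, $\spli(RG)\leq c+q$.

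For $\silp(RG)$, I would run the dual argument with $\Hom_\Z(M_i,P)$ and coinduction for a projective $RG$-module $P$, giving $\id_{RG}P\leq c+q$. Alternatively, once $\spli(RG)<\infty$, I would argue that $\sfli$ on both sides is finite. Lemma~\ref{tensorbound} applies with the weak characteristic module over $\Z$, since $\Ghd_\Z Q\leq\Gcd_\Z Q$. Then I would apply \eqref{twosfli} together with Proposition~\ref{facts}(i), using $\FPD(RG)$ controlled by the same resolution, to obtain $\silp(RG)=\spli(RG)\leq c+q$.
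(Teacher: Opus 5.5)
Your outer framework is sound and matches the paper: the lower bound from Lemma~\ref{subgroup} with $H=N$, equality once both invariants are finite, and the isomorphism $\Z Q\otimes_\Z X\cong RG\otimes_{RN}X$ giving $\pd_{RG}(L\otimes_\Z X)\leq c$ for projective $\Z Q$-modules $L$. The gap is the step you yourself flag as the main obstacle, and neither remedy closes it.

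Dimension shifting along a complete resolution of $M'$ cannot give finiteness. Its left half only gives an infinite resolution of $M'\otimes_\Z X$ by modules of projective dimension at most $c$. Its right half gives $\Ext^j_{RG}(M'\otimes_\Z X,Y)\cong\Ext^{j+n}_{RG}(M^{(n)}\otimes_\Z X,Y)$ for $j>c$, with $M^{(n)}$ again Gorenstein projective, so you are back to the same problem. A uniform vanishing bound is essentially the finiteness you are trying to prove. Even granting $\pd_{RG}(M'\otimes_\Z X)<\infty$, the short exact sequence only gives $\pd_{RG}(M_q\otimes_\Z X)\leq\max\{c,\pd_{RG}(M'\otimes_\Z X)-1\}$, not $\leq c$.

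In the fallback, the bound $\Gpd_{RG}(M_q\otimes_\Z X)\leq c$ is simply asserted. Tensoring a complete resolution of $M_q$ with $X$ gives an acyclic complex whose terms have projective dimension at most $c$. Nothing in your argument shows it stays exact under $\Hom_{RG}(-,P)$ for projective $P$, which a Gorenstein bound needs. The last step there, passing from $\Gpd_{RG}X\leq c+q$ to $\pd_{RG}X$ via Proposition~\ref{facts}(iv), would be fine. The dual argument for $\silp$ has the same hole, and the $\sfli$ alternative presupposes the $\spli$ bound. Moreover, Lemma~\ref{tensorbound} does not apply verbatim, because $RG$ is a crossed product of $RN$ with $Q$, not a group ring over $RN$.

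The missing idea is to \emph{embed} the trivial module rather than resolve it, so that no Gorenstein projective module is ever tensored with $X$. Since $\Gpd_{\Z Q}\Z=q$, there is an exact sequence $0\to\Z\xrightarrow{\eta}\Lambda\to M'\to0$ with $\pd_{\Z Q}\Lambda=q$ and $M'$ Gorenstein projective. This is Holm's standard approximation for modules of finite Gorenstein projective dimension, equivalently the characteristic module from Ren that the paper uses. As $M'$ is $\Z$-free, $\eta$ is $\Z$-split and $\Lambda$ is $\Z$-free.

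Your own argument for the projective terms, applied to a length-$q$ projective resolution of $\Lambda$, gives $\pd_{RG}(\Lambda\otimes_\Z X)\leq c+q$. The map $\eta\otimes_\Z 1_X\colon X\to\Lambda\otimes_\Z X$ is an $RG$-monomorphism, since $\eta(1)$ is $Q$-fixed, and it splits because $X$ is injective. So the cokernel $M'\otimes_\Z X$ never needs to be controlled.

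Dually, evaluation at $\eta(1)$ is an $RG$-epimorphism $\Hom_\Z(\Lambda,P)\to P$ that splits because $P$ is projective. Coinduction, together with closure of injective dimension $\leq c$ under products, gives $\id_{RG}\Hom_\Z(\Lambda,P)\leq c+q$. Both upper bounds then hold directly, which is exactly the paper's proof.
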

\begin{proof}
By \cite[Lemma 2.2 and Theorem 2.4]{Ren} applied over $\Z$, there are a $\Z$-projective $\Z Q$-module $\Lambda$ and a $\Z$-split $\Z Q$-monomorphism $\eta:\Z\to\Lambda$ with $\pd_{\Z Q}\Lambda=q$. A projective abelian group is free, so choose a projective resolution
\begin{equation}\label{lambda-resolution}
0\longrightarrow L_q\longrightarrow\cdots\longrightarrow L_0
\longrightarrow\Lambda\longrightarrow0
\end{equation}
over $\Z Q$. Every $L_i$ is free as an abelian group and all kernels in the sequence are subgroups of free abelian groups, hence free; thus every short exact sequence in \eqref{lambda-resolution} splits over $\Z$. Both tensoring and applying $\Hom_\Z(-,-)$ will therefore preserve exactness with arbitrary coefficients.

Set $B=RN$ and $A=RG$. For an injective left $A$-module $E$, restriction is injective over $B$ because $A_B$ is free, so $\pd_B E\leq c$. Inflate the $Q$-action on each $L_i$ and on $\Lambda$ to $G$ and give $L_i\otimes_\Z E$ the diagonal action, with $R$ acting on $E$. The map
\begin{equation}\label{extension-induction}
\Z Q\otimes_\Z E\longrightarrow A\otimes_B E,
\qquad\bar g\otimes e\longmapsto g\otimes g^{-1}e,
\end{equation}
is an $A$-isomorphism. If the lift $g$ is replaced by $gn$ with $n\in N$, the image becomes $gn\otimes n^{-1}g^{-1}e=g\otimes g^{-1}e$, so the map does not depend on the lift; its inverse sends $rg\otimes e$ to $\bar g\otimes rge$, and the two formulas also show $A$-linearity.

Inducing a $B$-projective resolution of $E$ shows that the target of \eqref{extension-induction} has $A$-projective dimension at most $c$. Each $L_i$ is a summand of a direct sum of copies of $\Z Q$, so $L_i\otimes_\Z E$ is a summand of a direct sum of copies of the source of \eqref{extension-induction} and has projective dimension at most $c$ over $A$. Tensoring \eqref{lambda-resolution} with $E$ gives an exact sequence of length $q$ ending in $\Lambda\otimes_\Z E$; its terms have projective dimension at most $c$, so dimension shifting gives
\[
\pd_A(\Lambda\otimes_\Z E)\leq c+q.
\]
The map $E\to\Lambda\otimes_\Z E$ induced by $\eta$ is a monomorphism, since $\eta$ splits over $\Z$, and it is $A$-linear because $\eta(1)$ is $Q$-invariant. Injectivity of $E$ splits this monomorphism over $A$, so $\pd_A E\leq c+q$. Taking the supremum,
\begin{equation}\label{extension-spli}
\spli(A)\leq c+q.
\end{equation}

For the other invariant, let $P$ be projective over $A$. Its restriction is projective over $B$, so $\id_B P\leq c$. For any $\Z Q$-module $L$ give $\Hom_\Z(L,P)$ the action
\[
(rf)(\lambda)=rf(\lambda),\qquad
(gf)(\lambda)=g f(g^{-1}\lambda).
\]
Applying $\Hom_\Z(-,P)$ to \eqref{lambda-resolution} gives an exact coresolution
\begin{equation}\label{lambda-coresolution}
0\longrightarrow\Hom_\Z(\Lambda,P)\longrightarrow\Hom_\Z(L_0,P)
\longrightarrow\cdots\longrightarrow\Hom_\Z(L_q,P)\longrightarrow0.
\end{equation}
The basic coinduction identification is
\begin{equation}\label{extension-coinduction}
\Hom_\Z(\Z Q,P)\cong\Hom_B(A,P).
\end{equation}
Writing the left-hand side as functions $f:Q\to P$, the map sends $f$ to the $B$-linear function $h$ determined by $h(g)=g f(\bar g^{-1})$; it is $B$-linear because $N$ acts trivially on $Q$. Its inverse is determined by $f(\bar g)=g h(g^{-1})$, which is independent of the lift. The coinduced action on the right is $(ah)(x)=h(xa)$, and substituting into the two formulas verifies that \eqref{extension-coinduction} respects both the group action and the $R$-action.

Coinduction is exact because ${}_BA$ is free, and it preserves injectives because restriction is exact. Applying it to a $B$-injective coresolution of $P$ of length at most $c$ gives
\[
\id_A\Hom_B(A,P)\leq c.
\]
Since $L_i$ is a summand of a direct sum of copies of $\Z Q$, the module $\Hom_\Z(L_i,P)$ is a summand of a product of copies of the module in \eqref{extension-coinduction}. A product of modules of injective dimension at most $c$ again has injective dimension at most $c$: take the product of their length-$c$ injective coresolutions, using exactness of products and injectivity of products of injectives. So every term after the first in \eqref{lambda-coresolution} has injective dimension at most $c$, and dimension shifting gives
\[
\id_A\Hom_\Z(\Lambda,P)\leq c+q.
\]
Evaluation at $\eta(1)$ defines an $A$-map $\Hom_\Z(\Lambda,P)\to P$, which is surjective: a $\Z$-linear retraction $\lambda:\Lambda\to\Z$ of $\eta$ sends any $p\in P$ to $z\mapsto\lambda(z)p$, which takes the value $p$ at $\eta(1)$. Since $P$ is projective, the epimorphism splits over $A$, so $\id_A P\leq c+q$, and taking the supremum gives $\silp(A)\leq c+q$.

Both invariants of $A$ are now finite, so Proposition~\ref{facts}(ii) makes them equal, while Lemma~\ref{subgroup} applied to $N\leq G$ gives the lower bound $c$.
\end{proof}

\begin{corollary}
If $\silp(R)=\spli(R)=d<\infty$ and $\Gcd_\Z G=q<\infty$, then
\[
d\leq\silp(RG)=\spli(RG)\leq d+q.
\]
For finite $G$ the common value is $d$.
\end{corollary}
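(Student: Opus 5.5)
This is the special case of Theorem~\ref{extension} with $N=1$ and $Q=G$, applied to the trivial extension $1\to 1\to G\to G\to 1$. In that case $RN=R$, so the hypothesis $\silp(RN)=\spli(RN)=c<\infty$ is exactly $\silp(R)=\spli(R)=d<\infty$ with $c=d$. The condition $q=\Gcd_\Z Q<\infty$ is the assumption $\Gcd_\Z G=q<\infty$. Theorem~\ref{extension} then gives
\[
d\leq\silp(RG)=\spli(RG)\leq d+q
\]
directly. The lower bound can also be read off from Lemma~\ref{subgroup} with $H=1$.

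For finite $G$ it remains to show $q=0$, so that the two bounds coincide and the common value is $d$. I will check that the trivial module $\Z$ is Gorenstein projective over $\Z G$ when $G$ is finite. The standard route is Tate's complete resolution. Splice a projective resolution $P_\bullet\to\Z$ with its $\Z$-dual, joined through the norm map, to obtain an exact complex of finitely generated free $\Z G$-modules with $\Z$ as a cycle. One then shows that this complex remains exact after applying $\Hom_{\Z G}(-,F)$ for every projective $F$.

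The exactness uses $\Hom_{\Z G}(-,\Z G)\cong\Hom_\Z(-,\Z)$ for finite $G$. This isomorphism carries the Hom complex to the $\Z$-dual of a $\Z$-split exact complex of lattices, so it is exact. The step from $\Z G$ to arbitrary projective $F$ is not automatic, because $\Hom$ out of an infinite complex does not commute with direct sums. I handle it by observing that all terms are finitely generated, so $\Hom_{\Z G}(X,F)\cong\Hom_{\Z G}(X,\Z G)\otimes_{\Z G}F$ termwise. The dual complex is a $\Z$-split exact complex of finitely generated free modules, hence stays exact after tensoring with $F$.

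Alternatively, one can quote the known equality $\Gcd_\Z G=0$ for finite groups and skip this argument. With $q=0$, the inequalities collapse to $\silp(RG)=\spli(RG)=d$. The only nontrivial step is the vanishing of $\Gcd_\Z G$ for finite $G$, specifically the passage from $\Z G$ to general projectives; everything else is a direct specialization of Theorem~\ref{extension}.
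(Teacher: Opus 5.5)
Your proposal is correct and is the paper's proof: the paper also takes $N=1$ in Theorem~\ref{extension} and cites $\Gcd_\Z G=0$ for finite $G$ (Ren, Example 2.5). Your Tate complete-resolution sketch of that vanishing is a valid self-contained substitute for the citation, with one small correction: exactness after applying $-\otimes_{\Z G}F$ holds because the projective module $F$ is flat, not because the complex is $\Z$-split.
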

\begin{proof}
Take $N=1$ in Theorem~\ref{extension}. For finite $G$ one has $\Gcd_\Z G=0$ \cite[Example 2.5]{Ren}, so the lower and upper bounds coincide.
\end{proof}

\begin{example}\label{example}
Let $G=\bigoplus_{j\geq1}C_2$ and $A=\Q G$. Then
\[
\silp(\Q)=\spli(\Q)=0,\qquad\Ghd_\Z G=0,
\]
but $\silp(A)=\spli(A)=1$.

The group is locally finite, so $\Ghd_\Z G=0$ by the locally finite case of \cite{KS}. Every finite subset of $G$ lies in a finite elementary abelian $2$-subgroup $H$, and $\Q H$ is a finite product of copies of $\Q$, so for every $a\in A$ there is $b\in\Q H\subseteq A$ with $a=a^2b$. Thus $A$ is a commutative von Neumann regular ring. The group and the field are countable, so $A$ is countable.

Let $I$ be an ideal of $A$ and choose generators $a_1,a_2,\ldots$. Every finitely generated ideal in a commutative von Neumann regular ring is generated by an idempotent, so write $(a_1,\ldots,a_n)=Ae_n$; choosing the idempotents so that $e_ne_{n+1}=e_n$, which is possible because the ideals are increasing. Set $d_1=e_1$ and $d_n=e_n-e_{n-1}$ for $n\geq2$. Then $d_n^2=d_n$ and $d_nd_j=0$ for $n\ne j$, and since $Ae_n=\bigoplus_{j\leq n}Ad_j$ we get
\[
I=\bigcup_n Ae_n=\bigoplus_{j\geq1}Ad_j.
\]
Each $Ad_j$ is a direct summand of $A$, so every ideal is projective, and the usual hereditary-ring criterion gives $\operatorname{gl.dim}A\leq1$; see \cite[Chapter 4]{Weibel}.

The augmentation module $\Q$ is not projective. Indeed, an $A$-linear section of the augmentation $\varepsilon:A\to\Q$ would send $1$ to an element $x$ with $gx=x$ for every $g\in G$ and $\varepsilon(x)=1$. For nonzero $x$ let $F$ be its finite support and choose $g\notin FF^{-1}$, possible because $G$ is infinite; then $gF\cap F=\varnothing$, contradicting $gx=x$. Hence no such section exists, and since $\operatorname{gl.dim}A\leq1$ we get $\pd_A\Q=1$. Therefore
\[
1\leq\FPD(A)\leq\silp(A)\leq\operatorname{gl.dim}A\leq1.
\]
Equality with $\spli(A)$ follows either from Theorem~\ref{main} or from \eqref{comm-equality}. So $\silp(RG)\leq\silp(R)+\Ghd_\Z G$ fails even for a field of characteristic zero.
\end{example}

\begin{corollary}\label{artin-application}
Let $\Lambda$ be an Artin algebra for which the Gorenstein symmetry conjecture holds. Then $\silp(\Lambda G)=\spli(\Lambda G)$ for every group $G$. If $\Lambda$ is Iwanaga--Gorenstein of self-injective dimension $d$ and $G$ is finite, the common value is $d$.
\end{corollary}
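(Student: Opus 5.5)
The first assertion will follow from Theorem~\ref{main} once we know $\silp(\Lambda)=\spli(\Lambda)$ as extended integers. So the plan is first to check that the Gorenstein symmetry conjecture for $\Lambda$ gives this equality, and then to handle the finite-group refinement with the corollary to Theorem~\ref{extension}.

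\emph{Step 1.} By \eqref{artin-identities}, $\silp(\Lambda)=\id_\Lambda\Lambda$ and $\spli(\Lambda)=\id_{\Lambda^{\op}}\Lambda$. If the conjecture holds for $\Lambda$, then \eqref{gsc} gives two cases. Either both self-injective dimensions are infinite, in which case both invariants are $\infty$ and equal. Or both are finite, in which case $\silp(\Lambda)$ and $\spli(\Lambda)$ are finite and coincide by Proposition~\ref{facts}(ii). In either case $\silp(\Lambda)=\spli(\Lambda)$, and Theorem~\ref{main} with $R=\Lambda$ yields $\silp(\Lambda G)=\spli(\Lambda G)$ for every group $G$.

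\emph{Step 2.} Suppose $\Lambda$ is Iwanaga--Gorenstein of self-injective dimension $d$, so $\id_\Lambda\Lambda=\id_{\Lambda^{\op}}\Lambda=d$. This is an instance of the symmetry hypothesis, and \eqref{artin-identities} gives $\silp(\Lambda)=\spli(\Lambda)=d<\infty$. For finite $G$ we have $\Gcd_\Z G=0$. The corollary following Theorem~\ref{extension}, with $q=0$, then gives $d\leq\silp(\Lambda G)=\spli(\Lambda G)\leq d$, so the common value is $d$.

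No step is really hard once the earlier results are granted. The only points needing care are the case split in Step~1 (the infinite case is covered directly by the conjecture, the finite case by Proposition~\ref{facts}(ii)) and the citation of \eqref{artin-identities} for the identification of the invariants with the self-injective dimensions.
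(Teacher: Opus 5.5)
Your proposal is correct and matches the paper's proof: the identities \eqref{artin-identities} together with \eqref{gsc} give $\silp(\Lambda)=\spli(\Lambda)$ (infinity included), and Theorem~\ref{main} then gives the first assertion. For the second assertion you use, as the paper does, the finite-group case of the corollary to Theorem~\ref{extension} (via $\Gcd_\Z G=0$) to get the common value $d$.
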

\begin{proof}
The identities \eqref{artin-identities} together with \eqref{gsc} give $\silp(\Lambda)=\spli(\Lambda)$, infinity included, and Theorem~\ref{main} gives the first assertion. In the second, both coefficient invariants equal $d$, and the finite-group case of the preceding corollary gives the stated value.
\end{proof}

\begin{remark}
Theorem~\ref{main} is a preservation result, not a solution of the general silp--spli symmetry problem for arbitrary rings; its hypothesis is carried throughout. Theorem~\ref{extension} and Example~\ref{example} show that preservation of equality, finiteness of the common value, and a numerical additive formula are genuinely separate questions.
\end{remark}

\end{document}